\documentclass[11pt]{article}

\usepackage[a4paper,margin=1in]{geometry}
\usepackage{amsmath,amssymb,amsthm,mathtools}
\usepackage{booktabs}
\usepackage{microtype}
\usepackage{enumitem}
\usepackage{array}
\usepackage{hyperref}
\usepackage{cleveref}
\usepackage[T1]{fontenc}
\usepackage{lmodern}

\hypersetup{colorlinks=true,linkcolor=blue,citecolor=blue,urlcolor=blue}
\allowdisplaybreaks

\newtheorem{theorem}{Theorem}[section]
\newtheorem{proposition}[theorem]{Proposition}
\newtheorem{lemma}[theorem]{Lemma}
\newtheorem{corollary}[theorem]{Corollary}
\theoremstyle{definition}

\theoremstyle{remark}

\newcommand{\Om}{\Omega}
\newcommand{\R}{\mathbb{R}}
\newcommand{\dd}{\,\mathrm{d}}
\newcommand{\norm}[1]{\left\lVert #1\right\rVert}
\newcommand{\Tmax}{T_{\max}}
\newcommand{\phinorm}{M_{\varphi}}

\title{Upper and Lower Bounds for Blow-up Times in Coupled\\
Reaction--Diffusion Systems with Nonlocal Sources}
\author{Xin Zhao$^{a}$\thanks{email: \texttt{zhaoxin-math@163.com}, Youth Talent Introduction Initiation Project(the Kunlun Elite Program) of Xinjiang Polytechnic Institute, 2025XQYM006} and Aynuer Ailatusi$^{a}$\thanks{email: \texttt{19801316267@163.com}, 
Youth Talent Introduction Initiation Project(the Kunlun Elite Program) of Xinjiang Polytechnic Institute, 2025XQYM079}  and Xianfa Song$^{a,b}$\thanks{email:  \texttt{songxianfa@tju.edu.cn} (or \texttt{songxianfa2004@163.com)}}\and Zeen Song$^{c}$\\[0.6em]
\small $^{a}$   Xinjiang Production and Construction Corps Key Laboratory of Green and Intelligent Development\\
\small and Efficient Utilization of Strategic Mineral Resources, Xinjiang Polytechnic Institute, Hotan
84800, China\\
\small $^{b}$Department of Mathematics, School of Mathematics, Tianjin University,\\[-0.2em]
\small Tianjin 300072, P. R. China\\[0.5em]
\small $^{c}$ School of Mathematics, Statistics and Mechnics, Beijing University of Technology,\\
\small No. 100 Pingleyuan, Chaoyang District, Beijing, 100124, P. R. China}
\date{}

\begin{document}
\maketitle

\begin{abstract}
In this paper, we consider the following reaction-diffusion system
\begin{equation*}
\begin{cases}
 u_t=\Delta u+F\bigl(x,t,u,v,V(t)\bigr), & (x,t)\in\Om\times(0,T),\\
 v_t=\Delta v+G\bigl(x,t,u,v,U(t)\bigr), & (x,t)\in\Om\times(0,T),\\
 \mathcal B u=\mathcal B v=0, & (x,t)\in\partial\Om\times(0,T),\\
 u(x,0)=u_0(x),\quad v(x,0)=v_0(x), & x\in\Om,
\end{cases}
\end{equation*}
where
\begin{equation*}
 U(t)=\int_{\Om}u(x,t)\dd x,\qquad
 V(t)=\int_{\Om}v(x,t)\dd x,
\end{equation*}
and the boundary operator $\mathcal B$ is either
\[
 \mathcal B_D w=w
 \qquad\text{or}\qquad
 \mathcal B_N w=\partial_\nu w,
\]
$\Om\subset\R^N$ be a bounded domain with sufficiently smooth boundary, $N\ge 1$, and $\nu$ is the outward unit normal on $\partial\Om$.  

Under certain conditions, we obtain the upper and lower bounds for the blowup time of the solution to the system.
\end{abstract}

\textbf{Keywords:} Reaction--diffusion system; Nonlocal source; Blowup time.

\textbf{MSC 2020:} Primary 35B44; Secondary 35K51, 35K57, 35K58.

\section{Introduction}

\qquad In this paper, we consider the following reaction-diffusion system:
\begin{equation}\label{eq:system}
\begin{cases}
 u_t=\Delta u+F\bigl(x,t,u,v,V(t)\bigr), & (x,t)\in\Om\times(0,T),\\
 v_t=\Delta v+G\bigl(x,t,u,v,U(t)\bigr), & (x,t)\in\Om\times(0,T),\\
 \mathcal B u=\mathcal B v=0, & (x,t)\in\partial\Om\times(0,T),\\
 u(x,0)=u_0(x),\quad v(x,0)=v_0(x), & x\in\Om,
\end{cases}
\end{equation}
where
\begin{equation}\label{eq:masses}
 U(t)=\int_{\Om}u(x,t)\dd x,\qquad
 V(t)=\int_{\Om}v(x,t)\dd x,
\end{equation}
and the boundary operator $\mathcal{B}$ is either
\[
 \mathcal B_D w=w
 \qquad\text{or}\qquad
 \mathcal B_N w=\partial_\nu w,
\]
where $\Om\subset\R^N$ is a bounded domain with sufficiently smooth boundary, $N\ge 1$, and $\nu$ is the outward unit normal on $\partial\Om$. Under certain assumptions,  we can place the nonlocal system in the standard semilinear local well-posedness and continuation framework \cite{Henry1981,Amann1984} and establish the well-posedness result in Section 2.  

 However, we are mainly concerned with the lower and upper bounds for the blowup time of the solution to the system. Finite time blowup is an interesting topics on nonlinear parabolic equations. We can refer to \cite{Kaplan1963, Fujita1966, FriedmanMcLeod1985,BandleBrunner1998, QuittnerSouplet2019} which studied
the finite time blowup phenomenon of the solution to the scalar  parabolic equation. For the systems of parabolic equations, we can refer to \cite{EscobedoHerrero1991, EscobedoLevine1995} and the references therein to see the results on blowup phenomenon.

A related quantitative problem is to estimate the bounds for blowup time. In \cite{BebernesKassoy1981}, Bebernes and Kassoy had given the explicit estimates for the classical thermal-runaway models. About the results on bound for the solution to scalar equation, we can refer to \cite{Payne2006,Payne2007, Payne2008,Payne20091, Payne20092, Song2011} and the references therein.  For the system of parabolic equations with nonlocal factors in reaction terms, lower bounds for blowup time were derived in \cite{TangLiYang2014},  upper and lower bounds for blowup time were obtained by Bao and Song in \cite{BaoSong2016} as well as the  related estimates for quasilinear systems in \cite{LuBaoSong2017,BaoSong2021}. Further results can be found in \cite{ MarrasVernierPiro2017, Xue2021, KwonFang2022,LiuZhangLiu2025,Liu2026}. Recently, the bounds for blowup time of stochastic system were discussed in \cite{SankarMohanKarthikeyan2024}. 

We will estimate the bounds for blowup time under the following assumptions respectively.

{\bf Assumption 1.} There exist $p_1,q_1,p_2,q_2>1$, functions $a_1,a_2\in C(\overline\Om\times[0,\infty))$, and constants
\[
 a_1^*,a_2^*,b_1^*,b_2^*>0
\]
such that $a_i(x,t)\ge a_i^*>0$ and
\begin{align}
F\bigl(x,t,u,v,V(t)\bigr)
&\ge a_1(x,t)v^{p_1}+b_1^*V(t)^{p_2},\label{eq:A1F}\\
G\bigl(x,t,u,v,U(t)\bigr)
&\ge a_2(x,t)u^{q_1}+b_2^*U(t)^{q_2}.\label{eq:A1G}
\end{align}

{\bf Assumption 2.} For some $\alpha,\beta>1$, $0<\rho,\sigma<1$, positive constants $c_1,c_2,C_1,C_2$, nonnegative constants $b_1,b_2$, and bounded nonnegative functions $A_1,A_2$ satisfying
\[
 0\le A_1(x,t)\le A_1^*,\qquad 0\le A_2(x,t)\le A_2^*,
\]
assume $0<c_i\le C_i$ and
\begin{align}
 c_1V(t)^\alpha
 &\le F\bigl(x,t,u,v,V(t)\bigr)
 \le C_1V(t)^\alpha+A_1(x,t)v^\rho+b_1,\label{eq:A2F}\\
 c_2U(t)^\beta
 &\le G\bigl(x,t,u,v,U(t)\bigr)
 \le C_2U(t)^\beta+A_2(x,t)u^\sigma+b_2.\label{eq:A2G}
\end{align}

The paper is organized as follows. In Section~\ref{sec:setting}, we first establish the local well-posedness result and give the definition of blowup in finite time. Then we develop the moment minimization and obtain the rigorous Dirichlet weighted-diffusion identity. In Sections~\ref{sec:a1}, we establish upper bounds for the blowup time under Assumptions 1 and Assumption 2 respectively. In Section~\ref{sec:lower}, we first establish lower bounds for the blowup time under Assumption 2. Then we can obtain two-side estimates and explain why Assumption 1 alone admits no uniform positive lower bound. 

\section{Preliminary}\label{sec:setting}
\qquad First, we give the definition of classical solution to (\ref{eq:system}) below.

{\bf Definition 2.1.} A classical solution to (\ref{eq:system}) on $[0,T)$ means a pair $(u,v)$ with
\[
 u,v\in C(\overline\Om\times[0,T))\cap C^{2,1}(\Om\times(0,T))
\]
which satisfies the governing equations in $\Om\times(0,T)$, as well as the compatible boundary conditions for $0<t<T$ and the initial data continuously at $t=0$. 

To establish the local well-posedness result, we will impose the following standing hypotheses.

\begin{description}[leftmargin=3.2em,style=nextline]
\item[H1.]
$F$ and $G$ are locally H\"older continuous in  $(x,t)\in\overline\Om\times[0,T_0]$ for every $T_0>0$ and uniformly Lipschitz continuous in $(u,v,m)$ satisfying  $0\le u,v,m\le R$ for every $R>0$.

\item[H2.]
For every admissible $x,t,u,v$, $ F(x,t,0,v,V)\ge 0$, $G(x,t,u,0,U)\ge 0$.

\item[H3.]
$F$ is nondecreasing with respect to the $v$ component and its mass variable, and $G$ is nondecreasing with respect to the $u$ component and its mass variable when the remaining variables keep fixed admissible values.

\item[H4.]
The functions $u_0,v_0\ge 0$ are not both identically zero, $u_0,v_0\in C^{2+\vartheta}(\overline\Om)$ for some $0<\vartheta<1$, and satisfy the compatible boundary conditions.

\end{description}

Under these assumptions, we can establish the following local well-posedness results.
\begin{proposition}\label{prop:local}
Assume H1--H4. Then \eqref{eq:system} has a unique nonnegative classical solution $(u,v)$ on a maximal interval
\[
 [0,\Tmax),\qquad 0<\Tmax\le\infty.
\]
If $\Tmax<\infty$, then
\begin{equation}\label{eq:cont}
 \limsup_{t\uparrow\Tmax}
 \left(\norm{u(\cdot,t)}_{L^\infty(\Om)}+
 \norm{v(\cdot,t)}_{L^\infty(\Om)}\right)=\infty.
\end{equation}
\end{proposition}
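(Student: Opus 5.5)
We will recast \eqref{eq:system} as a fixed-point problem for the variation-of-constants formula and proceed in four steps: truncation and contraction, nonnegativity, regularity, and continuation. Let $e^{t\Delta_{\mathcal B}}$ be the heat semigroup on $X=C(\overline\Om)$ (Dirichlet case: the closed subspace $C_0(\overline\Om)$ of functions vanishing on $\partial\Om$) with boundary operator $\mathcal B$. It is positive and contractive in $L^\infty$. A mild solution is then a pair solving
\[
 u(t)=e^{t\Delta}u_0+\int_0^t e^{(t-s)\Delta}F\bigl(\cdot,s,u(s),v(s),V(s)\bigr)\dd s,
\]
with the analogous formula for $v$, where $U,V$ are given by \eqref{eq:masses}.

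\textbf{Step 1 (truncation and contraction).} Since H1 only gives Lipschitz bounds for nonnegative arguments, we first replace $u,v,U,V$ inside $F,G$ by their positive parts $u_+,v_+,U_+,V_+$. This preserves local Lipschitz continuity. Fix $R>\norm{u_0}_\infty+\norm{v_0}_\infty+1$ and work in the closed ball $\{\norm{u(t)}_\infty+\norm{v(t)}_\infty\le R\}$ of $C([0,\tau];X)^2$. The nonlocal terms are harmless: $|U(t)-\tilde U(t)|\le|\Om|\,\norm{u(t)-\tilde u(t)}_\infty$, so $U,V$ are Lipschitz functionals of the state, and $|U|,|V|\le |\Om|R$. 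By H1, with Lipschitz constant $L_R$ on $[0,|\Om|R+R]^3$, the right-hand side maps the ball into itself and is a contraction once $\tau\,(\sup F+\sup G)$ and $\tau L_R(1+|\Om|)$ are small. Banach's fixed point theorem then gives a unique mild solution on $[0,\tau]$, with $\tau$ depending only on $R$.

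\textbf{Step 2 (nonnegativity).} We must show $u,v\ge0$, which also shows the truncation was inactive. Write $F(\cdot,u_+,\dots)=F(\cdot,0,v_+,V_+)+c(x,t)\,u$, where $c$ is bounded by H1 and the difference quotient is taken in $u_+$. Since $F(x,t,0,v_+,V_+)\ge0$ by H2, $u$ satisfies $u_t-\Delta u-cu\ge0$ with $\mathcal B u=0$ and $u_0\ge0$, and the maximum principle gives $u\ge0$; the same argument applies to $v$. Alternatively, run the Picard iteration in the cone of nonnegative functions, using positivity of the semigroup together with H2 and H3.

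\textbf{Step 3 (regularity and uniqueness).} The mild solution is bounded and continuous, so $U,V$ are continuous in $t$. Hence $f(x,t)=F(x,t,u,v,V(t))$ is bounded, and parabolic $L^p$ estimates give $u\in W^{2,1}_p$, hence Hölder in $(x,t)$. Then $f$ is Hölder, since $F$ is Hölder in $(x,t)$ and Lipschitz in the state, and $V(t)$ is Hölder because $v$ is. Schauder theory, using H4 ($C^{2+\vartheta}$ data with compatibility), gives $u,v\in C^{2+\vartheta',1+\vartheta'/2}$ up to $t=0$, so we have a classical solution in the sense of Definition 2.1. Conversely, every classical solution is a mild solution, and Gronwall's inequality for the difference of two such solutions, using the Lipschitz bound on $[0,R]$, gives uniqueness.

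\textbf{Step 4 (continuation and blow-up alternative).} Because $\tau$ depends only on the $L^\infty$ size of the data, we can restart from $t_0$ with data $(u(t_0),v(t_0))$. Let $\Tmax$ be the supremum of the existence times. If $\Tmax<\infty$ but $\norm{u}_\infty+\norm{v}_\infty\le K$ on $[0,\Tmax)$, restarting at $\Tmax-\tau(K)/2$ extends the solution beyond $\Tmax$, a contradiction; this proves \eqref{eq:cont}. The restart data are only $C^{2+\vartheta'}$, not $C^{2+\vartheta}$, but this does not matter, since interior smoothness for $t>0$ suffices for Definition 2.1.

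The main obstacle is Step 2, nonnegativity, because H1 and H2 only control $F$ at nonnegative arguments. The plan is to use the truncated $F$ and then show with the maximum principle that truncation is never active.
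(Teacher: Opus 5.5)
Your proposal is correct and follows the same route as the paper. The paper simply invokes standard semilinear theory: positive analytic heat semigroups, local Lipschitz continuity of the nonlocal reaction map because $w\mapsto\int_\Om w$ is bounded on $C(\overline\Om)$, and Henry/Amann for existence, uniqueness and the blow-up alternative. You are writing out the fixed-point, positivity, regularity and continuation steps that the paper leaves to those references.

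One small caveat concerns your ``alternative'' Picard iteration in the nonnegative cone. It is not justified by H2--H3 alone, since they give $F\ge0$ only at $u=0$ and no monotonicity of $F$ in $u$; you would need the shift $u_t-\Delta u+Lu=F+Lu$. Your main truncation-plus-maximum-principle argument is fine, provided you apply it after the regularity of Step 3 or run it at the mild level via a Gronwall estimate on $\norm{u_-(t)}_\infty$.
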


\begin{proof}
The Dirichlet and Neumann Laplacians generate analytic positive heat semigroups on $C(\overline\Om)$.
 Since $w\mapsto\int_\Om w$ is a bounded linear functional on $C(\overline\Om)$, the induced nonlocal reaction map is locally Lipschitz on bounded subsets of $C(\overline\Om)^2$.
Combining it with H1-H4,  we can use standard semilinear parabolic theory to obtain local existence, uniqueness, and the blowup alternative, see \cite{Henry1981,Amann1984}.  As a result, the only obstruction to continuation at finite time is unboundedness of the $C(\overline\Om)^2$ norm, which is exactly \eqref{eq:cont}.
\end{proof}

{\bf Definition 2.2.}  When $\Tmax<\infty$, we call $\Tmax$ the blow-up time.  Especially, if there exists a finite time $T$ such that
\[
 \lim_{t\uparrow T}\bigl(U(t)+V(t)\bigr)=\infty,
\]
we say that the solution undergoes $L^1$ blow-up at time $T$.

We will establish the upper bounds for the blowup time of the solution to ODE's inequality below.

First, we give a moment minimization lemma as follows. 

\begin{lemma}\label{lem:min}
Let
\[
 P(y)=\sum_{i=1}^m a_i y^{\tilde{p}_i},\qquad
 Q(x)=\sum_{j=1}^n b_j z^{\tilde{q}_j},
\]
where $a_i,b_j>0$ and $\tilde{p}_i,\tilde{q}_j>1$. Define
\begin{equation}\label{eq:hdef}
 h(s)=\min_{0\le z\le s}\{P(s-z)+Q(z)\},\qquad s\ge0.
\end{equation}
Then:
\begin{enumerate}[label=\textnormal{(\roman*)}]
\item the minimum is attained for every $s\ge0$;
\item $h(0)=0$, $h(s)>0$ for $s>0$, and $h$ is continuous and locally Lipschitz on $[0,\infty)$;
\item $h(s)/s$ is nondecreasing on $(0,\infty)$;
\item there exist $c>0$, $r>1$, and $s_*>0$ such that $h(s)\ge cs^r$ for $s\ge s_*$.
\end{enumerate}
Consequently,
\[
 \int_R^\infty \frac{\dd s}{h(s)}<\infty
\]
for every $R>0$.
\end{lemma}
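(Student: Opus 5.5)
Write $\Phi_s(z)=P(s-z)+Q(z)$ for $0\le z\le s$; note the statement's $Q(x)$ is a typo for $Q(z)$. The plan is to prove (i)–(iv) in order using only that $P,Q$ are continuous, vanish only at $0$, and are finite sums of powers with exponents $>1$. The closing integrability claim then follows from (ii) and (iv).

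For (i), $\Phi_s$ is continuous on the compact interval $[0,s]$, so its minimum is attained. For (ii), $h(0)=P(0)+Q(0)=0$. For $s>0$, each of $P(s-z)$ and $Q(z)$ is nonnegative, and they cannot vanish simultaneously because $(s-z)+z=s>0$; since the minimum is attained, $h(s)>0$.

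For continuity and the local Lipschitz property, I would rescale to a fixed domain: $h(s)=\min_{\theta\in[0,1]}\{P(s(1-\theta))+Q(s\theta)\}$. On $[0,S]$ the map $s\mapsto P(s(1-\theta))+Q(s\theta)$ is Lipschitz uniformly in $\theta$, with constant at most $\max_{[0,S]}P'+\max_{[0,S]}Q'$. A pointwise minimum of a family with a uniform Lipschitz constant is Lipschitz with the same constant, which gives (ii). This rescaling is the key device and also drives (iii).

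For (iii), each monomial satisfies $a(\lambda y)^{\tilde p}\ge\lambda\,a y^{\tilde p}$ for $\lambda\ge1$, since $\tilde p>1$. Hence for $\lambda\ge1$ and every $\theta$, $\Phi$ at $\lambda s$ dominates $\lambda$ times $\Phi$ at $s$, and taking minima gives $h(\lambda s)\ge\lambda h(s)$. Thus $h(s)/s$ is nondecreasing.

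For (iv), set $r=\min\{\tilde p_i,\tilde q_j\}>1$, $a=\min a_i$, $b=\min b_j$. For $y\ge1$, $P(y)\ge a y^{r}$, and similarly for $Q$. For any $z\in[0,s]$, either $s-z\ge s/2$ or $z\ge s/2$. When $s\ge2$, the corresponding term is at least $\min(a,b)(s/2)^r$. So $h(s)\ge c s^r$ for $s\ge s_*=2$ with $c=\min(a,b)2^{-r}$.

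For the final claim, split $\int_R^\infty$ as $\int_R^{\max(R,s_*)}+\int_{\max(R,s_*)}^\infty$. The first piece is finite because $h$ is continuous and positive on a compact interval away from $0$. The second is bounded by $c^{-1}\int s^{-r}\,\mathrm ds<\infty$ since $r>1$.

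I expect no step to be a real obstacle. The only point needing care is the uniform-in-$\theta$ Lipschitz bound in (ii), which the rescaling to $\theta\in[0,1]$ handles.
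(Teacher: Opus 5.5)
Your proposal is correct and follows essentially the same route as the paper: the rescaling $z=\theta s$ to obtain an equi-Lipschitz family for (ii), monotonicity in $s$ of $\mathcal F(s,\theta)/s$ for each fixed $\theta$ for (iii) (which you phrase equivalently as $h(\lambda s)\ge\lambda h(s)$ for $\lambda\ge1$), and the dichotomy $s-z\ge s/2$ or $z\ge s/2$ for (iv). Your explicit treatment of the compact piece $\int_R^{\max(R,s_*)}$ in the final integrability claim, using continuity and positivity of $h$, fills in a step the paper leaves implicit.
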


{\bf Proof:} Note that for fixed $s$, the objective in \eqref{eq:hdef} is continuous on the compact interval $[0,s]$, so the minimum value can be attained and it can vanish only at $s=0$.

Let $z=\theta s$ and 
\[
 \mathcal F(s,\theta)=P((1-\theta)s)+Q(\theta s),
 \qquad (s,\theta)\in[0,\infty)\times[0,1].
\]
Then $h(s)=\min_{0\le\theta\le1}\mathcal F(s,\theta)$ and $h$ is continuous because uniform continuity of $\mathcal F$ on compact rectangles. Moreover, the family $\{\mathcal F(\cdot,\theta)\}_{\theta\in[0,1]}$ is equi-Lipschitz on each $[0,R]\times[0,1]$ because the $s$-derivatives of all monomial terms are uniformly bounded. As a result, $h(s)$ preserves the same Lipschitz bound.

For $s>0$, let
\[
 \frac{h(s)}s
 =\min_{0\le\theta\le1}
 \left[
 \sum_i a_i(1-\theta)^{\tilde{p}_i}s^{\tilde{p}_i-1}
 +\sum_j b_j\theta^{\tilde{q}_j}s^{\tilde{q}_j-1}
 \right].
\]
Since all exponents $\tilde{p}_i-1$ and $\tilde{q}_j-1$ are positive, the bracket is nondecreasing in $s$ for each fixed $\theta$. Consequently, its minimum is nondecreasing.

Denote $p_0=p_{i_0}$, $a_0=a_{i_0}$, $q=q_{j_0}$ and $b=b_{j_0}$ for arbitrarily chosen $i_0$ and $j_0$. Since for every $z\in[0,s]$, either $s-z\ge s/2$ or $z\ge s/2$, we have
\[
 P(s-z)+Q(z)\ge \min\{a2^{-p}s^p,b2^{-q}s^q\}.
\]
Especially, this is bounded below by $cs^r$ for $s\ge1$ with
\[
 r=\min\{p,q\}>1,
 \qquad c=\min\{a2^{-p},b2^{-q}\}>0.
\]
(iv) is proved and the Osgood integrability follows.\hfill $\Box$

\begin{lemma}\label{lem:osgood}
 Assume that $S:[0,T)\to(0,\infty)$ is a locally absolutely continuous satisfies 
\begin{equation}\label{eq:osgdiff}
 S'(t)\ge-\lambda S(t)+h(S(t))\quad\text{for a.e. }t\in(0,T),
 \qquad S(0)=S_0>0.
\end{equation}
Here $h$ is defined in Lemma~\ref{lem:min} and $\lambda\ge 0$ is a constant. If
\begin{equation}\label{eq:thresholdabstract}
 h(S_0)>\lambda S_0,
\end{equation}
then $S(t)\ge S_0$ and
\begin{equation}\label{eq:osgbound}
 T\le \int_{S_0}^{\infty}\frac{\dd s}{h(s)-\lambda s}.
\end{equation}
\end{lemma}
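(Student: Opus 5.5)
We set $g(s)=h(s)-\lambda s$, so that \eqref{eq:osgdiff} reads $S'\ge g(S)$ a.e. The argument has two parts. First we show $S$ never drops below $S_0$. Then we separate variables and integrate.

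\textbf{Step 1: $g$ stays positive on $[S_0,\infty)$.} By Lemma~\ref{lem:min}(iii), $h(s)/s$ is nondecreasing. Hence for $s\ge S_0$,
\[
\frac{g(s)}{s}=\frac{h(s)}{s}-\lambda\ge \frac{h(S_0)}{S_0}-\lambda=:\delta>0,
\]
using \eqref{eq:thresholdabstract}. So $g(s)\ge\delta s\ge \delta S_0>0$ on $[S_0,\infty)$. Also $g$ is continuous by Lemma~\ref{lem:min}(ii).

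\textbf{Step 2: $S(t)\ge S_0$ for all $t\in[0,T)$.} I expect this to be the main technical point, because the inequality holds only a.e. and we must avoid any circular reasoning. We argue by contradiction. Suppose $S(t_1)<S_0$ for some $t_1$, and let
\[
t_0=\sup\{t\le t_1: S(t)\ge S_0\}.
\]
By continuity $S(t_0)=S_0$ and $S<S_0$ on $(t_0,t_1]$. Since $g(S_0)>0$ and $g$ is continuous, there is $\eta>0$ with $g>0$ on $[S_0-\eta,S_0]$. By continuity of $S$, $S(t)\in[S_0-\eta,S_0]$ for $t\in[t_0,t_0+\epsilon]$, for some small $\epsilon>0$. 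On this interval $S'\ge g(S)>0$ a.e. Absolute continuity then gives $S(t_0+\epsilon)-S(t_0)=\int S'>0$, which contradicts $S<S_0$ there. Hence $S\ge S_0$. Consequently $S'\ge g(S)\ge\delta S_0>0$ a.e., so $S$ is in fact nondecreasing.

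\textbf{Step 3: integrate.} Define $\Phi(\sigma)=\int_{S_0}^{\sigma} \dd s/g(s)$ for $\sigma\ge S_0$. This $\Phi$ is $C^1$ with $\Phi'=1/g$, and $1/g$ is bounded by $1/(\delta S_0)$ on $[S_0,\infty)$. Hence $\Phi\circ S$ is locally absolutely continuous, and the chain rule gives
\[
(\Phi\circ S)'=\frac{S'}{g(S)}\ge 1 \quad\text{a.e.}
\]
Integrating from $0$ to $t$ yields
\[
t\le \Phi(S(t))\le \int_{S_0}^\infty\frac{\dd s}{g(s)}.
\]
The right-hand side is finite: $g(s)\ge h(s)-\lambda s$ with $h(s)\ge cs^r$ for large $s$ and $r>1$ by Lemma~\ref{lem:min}(iv), so $g(s)\ge \tfrac c2 s^r$ for $s$ large, while $1/g$ is bounded on compact parts of $[S_0,\infty)$ by Step 1. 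Letting $t\uparrow T$ gives \eqref{eq:osgbound}.
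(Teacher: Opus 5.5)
Your proof is correct and follows the same route as the paper: you use the monotonicity of $h(s)/s$ to get $(h(s)-\lambda s)/s\ge\delta>0$ on $[S_0,\infty)$, then apply the chain rule to $\Psi(S(t))=\int_{S_0}^{S(t)}\dd s/(h(s)-\lambda s)$, which gives $t\le\Psi(S(t))\le\Psi(\infty)<\infty$. Your Step 2 adds a rigorous first-exit argument for $S\ge S_0$, a step the paper takes for granted when it asserts $S'\ge\delta S_0$ directly.
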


{\bf Proof:} Since $h(s)/s$ is nondecreasing,
\[
 \frac{h(s)-\lambda s}{s}
 \ge \frac{h(S_0)}{S_0}-\lambda=:\delta>0,
 \qquad s\ge S_0.
\]
Substituting it into (\ref{eq:osgdiff}),  we can  $S'\ge\delta S_0>0$ almost everywhere. 

Let $g(s)=h(s)-\lambda s>0$ for $s\ge S_0$ and
\[
 \Psi(r)=\int_{S_0}^{r}\frac{\dd s}{g(s)}.
\]
 By Lemma~\ref{lem:min}, $h(s)\ge cs^r$ with $r>1$ for large $s$, hence $g(s)\ge(c/2)s^r$ eventually, and $\Psi(\infty)<\infty$.

Using the chain rule for absolutely continuous functions, we obtain
\[
 \frac{\dd}{\dd t}\Psi(S(t))=\frac{S'(t)}{g(S(t))}\ge 1
\]
for almost every $t$. Integrating it, we obtain  $t\le\Psi(S(t))\le\Psi(\infty)<\infty$ for all $t<T$, and \eqref{eq:osgbound} as $t\uparrow T$.\hfill $\Box$

Let $\varphi_1$ and $\lambda_1$ is the first engipair 
\begin{equation}\label{eq:eigen}
 -\Delta\varphi_1=\lambda_1\varphi_1\quad\text{in }\Om,
 \qquad \varphi_1=0\quad\text{on }\partial\Om,
\end{equation}
normalized by
\[
 \varphi_1>0,\qquad \int_\Om\varphi_1\dd x=1.
\]
Set
\begin{equation}\label{eq:weightedmoments}
 X(t)=\int_\Om u(x,t)\varphi_1(x)\dd x,
 \qquad
 Y(t)=\int_\Om v(x,t)\varphi_1(x)\dd x,
 \qquad S(t)=X(t)+Y(t),
\end{equation}
and
\begin{align}
\phinorm=\norm{\varphi_1}_{L^\infty(\Om)}.\label{1041}
\end{align}

We will give another lemma below.
\begin{lemma}\label{lem:diridentity}
Under the Dirichlet boundary condition, we have
\begin{align}
 X'(t)&=-\lambda_1X(t)+\int_\Om F(t)\varphi_1\dd x,\label{eq:Xprime}\\
 Y'(t)&=-\lambda_1Y(t)+\int_\Om G(t)\varphi_1\dd x.\label{eq:Yprime}
\end{align}
\end{lemma}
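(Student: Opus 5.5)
The plan is to differentiate $X(t)=\int_\Om u\varphi_1\dd x$ under the integral sign, substitute the equation $u_t=\Delta u+F$, and move the Laplacian onto $\varphi_1$ with Green's second identity. The boundary terms vanish because $u=\varphi_1=0$ on $\partial\Om$. The identity for $Y$ is obtained in exactly the same way with $G$ in place of $F$.

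The one point needing care is regularity up to the boundary. Definition 2.1 only provides $u\in C^{2,1}(\Om\times(0,T))$, not $C^{2,1}$ up to $\partial\Om$. However, H1 and H4 (compatible $C^{2+\vartheta}$ data, locally H\"older nonlinearity) together with Schauder theory give more. For $t\in(0,\Tmax)$, $u(\cdot,t)$ lies in $C^{2+\vartheta}(\overline\Om)$ and $u_t$ is continuous on $\overline\Om\times[\tau,t_1]$ for any $0<\tau<t_1<\Tmax$. The nonlocal terms $U(t),V(t)$ are continuous functions of $t$, so they only enter as H\"older-in-time coefficients. I expect justifying this to be the main obstacle. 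If full Schauder regularity is not wanted, I would use a fallback: apply Green's identity on the interior approximating domains $\Om_\varepsilon=\{x:\mathrm{dist}(x,\partial\Om)>\varepsilon\}$ and let $\varepsilon\to0$. The boundary integrals $\int_{\partial\Om_\varepsilon}(\varphi_1\partial_\nu u-u\partial_\nu\varphi_1)$ then tend to $0$, using $\varphi_1,u\to0$ uniformly near $\partial\Om$ and the boundedness of $\nabla u$ and $\nabla\varphi_1$ near the boundary. The gradient bound on $u$ again comes from parabolic $W^{2,1}_p$ or Schauder estimates.

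With that regularity in hand, the steps are as follows. First, on $[\tau,t_1]$ the integrand $u_t\varphi_1$ is continuous and bounded on $\overline\Om\times[\tau,t_1]$, so $X\in C^1$ and
\[
X'(t)=\int_\Om u_t\varphi_1\dd x=\int_\Om \Delta u\,\varphi_1\dd x+\int_\Om F(t)\varphi_1\dd x .
\]
Second, Green's identity gives
\[
\int_\Om(\varphi_1\Delta u-u\Delta\varphi_1)\dd x=\int_{\partial\Om}(\varphi_1\partial_\nu u-u\partial_\nu\varphi_1)\dd S=0 .
\]
Third, since $-\Delta\varphi_1=\lambda_1\varphi_1$, we have $\int_\Om\Delta u\,\varphi_1\dd x=\int_\Om u\Delta\varphi_1\dd x=-\lambda_1X(t)$, which yields \eqref{eq:Xprime}. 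Since $\tau$ was arbitrary, the identity holds for all $t\in(0,\Tmax)$.

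Finally, the same computation with $v$ and $G$ gives \eqref{eq:Yprime}. Continuity of $X$ and $Y$ at $t=0$ follows from $u,v\in C(\overline\Om\times[0,T))$. This means the identities can also be used in integrated form from $0$, which is what the later ODE comparison via Lemma~\ref{lem:osgood} needs.
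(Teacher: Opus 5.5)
Your proof is correct, but it takes a different route from the paper. The paper never integrates by parts. It writes the Duhamel formula $u(t)=S_D(t-s)u(s)+\int_s^t S_D(t-\tau)F(\tau)\dd\tau$ for the Dirichlet heat semigroup and pairs it with $\varphi_1$. Using self-adjointness of $S_D$ on $L^2(\Om)$ together with $S_D(t)\varphi_1=e^{-\lambda_1 t}\varphi_1$, it obtains the integrated identity
\[
X(t)=e^{-\lambda_1(t-s)}X(s)+\int_s^t e^{-\lambda_1(t-\tau)}\left(\int_\Om F(\tau)\varphi_1\dd x\right)\dd\tau .
\]
Differentiating this gives \eqref{eq:Xprime}, and the only fact needed is continuity of $\tau\mapsto\int_\Om F(\tau)\varphi_1\dd x$.

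The semigroup route removes exactly the step you flagged as the main obstacle. It needs only $u(\tau),F(\tau)\in C(\overline\Om)$ depending continuously on $\tau$, so it works directly for the mild solution produced in Proposition~\ref{prop:local}. It requires no Schauder bootstrap, no regularity of $u$ or $u_t$ up to $\partial\Om$, and no $\Om_\varepsilon$ approximation. It also yields the integrated form starting from $s=0$, which is what the comparison in Lemma~\ref{lem:osgood} uses.

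Your Green's-identity argument is the more elementary and natural one for genuinely classical solutions. Its cost is the regularity work: Hölder continuity of $F(x,t,u,v,V(t))$ via H1, then parabolic estimates up to the lateral boundary on $[\tau,t_1]$. Even your $\Om_\varepsilon$ fallback still depends on those estimates for the gradient bounds and for differentiating $X$ under the integral. In return, you never have to identify the classical solution with the mild one, which the paper's proof uses silently.
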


{\bf Proof:}
Let $S_D(t)$ be the Dirichlet heat semigroup. By Duhamel's formula, we get
\[
 u(t)=S_D(t-s)u(s)+\int_s^t S_D(t-\tau)F(\tau)\dd\tau
\]
for $0\le s<t<\Tmax$. 
The semigroup is self-adjoint on $L^2(\Om)$ and $S_D(t)\varphi_1=e^{-\lambda_1 t}\varphi_1$. Therefore,
\[
 X(t)=e^{-\lambda_1(t-s)}X(s)
 +\int_s^t e^{-\lambda_1(t-\tau)}
 \left(\int_\Om F(\tau)\varphi_1\dd x\right)\dd\tau,
\]
which implies \eqref{eq:Xprime}. \eqref{eq:Yprime} can  be obtained similarly.  \hfill $\Box$

Using the results above, we can give the estimate for $T_{max}$ under certain conditions
\begin{theorem}\label{thm:abstractD}
Suppose that nonnegative moments $X,Y$ satisfy
\[
 X'\ge-\lambda X+P(Y),\qquad
 Y'\ge-\lambda Y+Q(X),
\]
where $\lambda>0$ and $P,Q$ are defined in Lemma~\ref{lem:min}. 
If $h(S_0)>\lambda S_0$, then
\begin{equation}\label{eq:abstractDbound}
 \Tmax\le \int_{S_0}^\infty\frac{\dd s}{h(s)-\lambda s},
\end{equation}
where $h(s)$ is defined by (\ref{eq:hdef}).
\end{theorem}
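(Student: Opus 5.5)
Add the two differential inequalities and reduce to Lemma~\ref{lem:osgood}. Set $S=X+Y$, which will play the role of the function $S$ in Lemma~\ref{lem:osgood}. Summing the two inequalities gives
\[
 S'(t)\ge -\lambda S(t)+P(Y(t))+Q(X(t))\quad\text{for a.e. }t\in(0,\Tmax).
\]
Since $X,Y\ge0$ and $X+Y=S$, we may take $z=X(t)\in[0,S(t)]$ in \eqref{eq:hdef}, so that $P(S-z)+Q(z)=P(Y)+Q(X)$. By the definition of $h$ as a minimum, $P(Y)+Q(X)\ge h(S)$. Therefore
\[
 S'\ge-\lambda S+h(S),
\]
which is exactly \eqref{eq:osgdiff}.

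Next, check the hypotheses of Lemma~\ref{lem:osgood} on $[0,\Tmax)$. Regularity: $X,Y$ are locally absolutely continuous. In the intended application this follows from Lemma~\ref{lem:diridentity}, whose integral (Duhamel) form shows that $X$ and $Y$ are $C^1$. In the abstract statement I take it as part of the meaning of ``$X'\ge\cdots$''. Positivity: we need $S>0$ and $S_0>0$. The hypothesis $h(S_0)>\lambda S_0\ge0$ forces $h(S_0)>0$, and since $h(0)=0$ by Lemma~\ref{lem:min}(ii), this gives $S_0>0$. Lemma~\ref{lem:osgood} then shows $S$ stays $\ge S_0>0$. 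To avoid circularity, I will note that its proof only uses $S>0$ along the comparison and that $S'\ge\delta S_0>0$ near any point where $S\ge S_0$; a continuity argument on the first time $S$ drops below $S_0$ handles this. The threshold condition \eqref{eq:thresholdabstract} is the assumption itself.

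Apply Lemma~\ref{lem:osgood} with $T$ replaced by any $T'<\Tmax$. This yields $T'\le\int_{S_0}^\infty ds/(h(s)-\lambda s)$, and letting $T'\uparrow\Tmax$ gives \eqref{eq:abstractDbound}. If $\Tmax=\infty$, the same inequality for every finite $T'$ contradicts the finiteness of the integral, which is guaranteed by Lemma~\ref{lem:min}(iv). So $\Tmax<\infty$ as well.

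The main obstacle is interpretive rather than computational. The theorem concerns moments, while $\Tmax$ is defined as the existence time of the PDE solution. The argument shows only that $X,Y$ cannot remain finite beyond the bound. Since $X\le\phinorm\norm{u}_{L^1}\le\phinorm|\Om|\,\norm{u}_{L^\infty}$, finiteness of $X$ and $Y$ on $[0,\Tmax)$ holds automatically. Hence the bound on the interval of existence of finite moments transfers to $\Tmax$, and I would state this comparison explicitly.
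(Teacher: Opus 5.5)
Your proposal is correct and follows the paper's proof: add the two inequalities, take $z=X(t)$ in the definition of $h$ to get $S'\ge-\lambda S+h(S)$, and apply Lemma~\ref{lem:osgood}. Your additional checks fill in details the paper leaves implicit: that $h(S_0)>\lambda S_0$ forces $S_0>0$, that $S$ cannot drop below $S_0$ (by continuity of $h(s)-\lambda s$ near $S_0$), that the moments are finite on $[0,\Tmax)$, and how to handle $\Tmax=\infty$.
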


{\bf Proof:} Adding the two inequalities, we get
\[
 S'\ge-\lambda S+P(Y)+Q(X)\ge-\lambda S+h(S).
\]
Using Lemma~\ref{lem:osgood}, we get the conclusion.\hfill $\Box$

\section{Upper bounds for the blowup time}\label{sec:a1}

\qquad In this section, we will establish the upper bounds for the blowup time under Assumption 1 and Assumption 2 respectively. Denote $D=|\Om|$ and $W(t)=U(t)+V(t)$.
We will discuss it in four cases. 

{\bf Case 1.} Under Assumption 1 and homogeneous Neumann boundary conditions. Note that 
\[
 U'(t)=\int_\Om F\dd x,\qquad V'(t)=\int_\Om G\dd x.
\]

Using Jensen's inequality, we can get
\begin{align}
 U'&\ge a_1^*D^{1-p_1}V^{p_1}+b_1^*D V^{p_2},\label{eq:A1NU}\\
 V'&\ge a_2^*D^{1-q_1}U^{q_1}+b_2^*D U^{q_2}.\label{eq:A1NV}
\end{align}
Define
\begin{align*}
 P_{1,N}(y)&=a_1^*D^{1-p_1}y^{p_1}+b_1^*Dy^{p_2},\\
 Q_{1,N}(z)&=a_2^*D^{1-q_1}z^{q_1}+b_2^*Dz^{q_2},
\end{align*}
and
\begin{equation}\label{eq:h1N}
 h_{1,N}(s)=\min_{0\le z\le s}\{P_{1,N}(s-z)+Q_{1,N}(z)\}.
\end{equation}
Then we have 
\begin{theorem}\label{thm:A1N}
Assume that H1--H4 hold. Then under Assumption 1 and homogeneous Neumann boundary conditions,  for every nonzero nonnegative initial datum, the solution has finite maximal existence time and
\begin{equation}\label{eq:A1Nbound}
 \Tmax\le\int_{W_0}^{\infty}\frac{\dd s}{h_{1,N}(s)},
 \qquad W_0=U(0)+V(0)>0.
\end{equation}
\end{theorem}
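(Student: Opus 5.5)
The plan is to reduce the PDE system to the scalar differential inequality of Lemma~\ref{lem:osgood} with $\lambda=0$, applied to the total mass $W(t)=U(t)+V(t)$, and then to read off the bound \eqref{eq:A1Nbound}.

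\textbf{Step 1: mass identities.} Under homogeneous Neumann conditions, integrate the equations over $\Om$. The divergence theorem gives $\int_\Om\Delta u\dd x=\int_{\partial\Om}\partial_\nu u=0$, and likewise for $v$. For $t>0$ this yields $U'(t)=\int_\Om F\dd x$ and $V'(t)=\int_\Om G\dd x$. At the level of regularity in Definition~2.1 I would justify this by first integrating over $[s,t]$ with $0<s<t<\Tmax$, using the $C^{2,1}$ regularity up to the boundary that the semigroup framework of Proposition~\ref{prop:local} provides for H\"older data, and then letting $s\downarrow0$ by continuity. This shows that $U$ and $V$ are locally absolutely continuous on $[0,\Tmax)$.

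\textbf{Step 2: lower bound for the sources.} By Proposition~\ref{prop:local}, $u,v\ge0$. Assumption~1 gives
\[
\int_\Om F\dd x\ge a_1^*\int_\Om v^{p_1}\dd x+b_1^*DV^{p_2}.
\]
Jensen's (or H\"older's) inequality gives $\int_\Om v^{p_1}\ge D^{1-p_1}V^{p_1}$, which yields \eqref{eq:A1NU}; the same argument gives \eqref{eq:A1NV}. Adding the two, and noting that $U,V\ge0$ with $V=W-U$ and $0\le U\le W$, we get
\[
W'\ge P_{1,N}(W-U)+Q_{1,N}(U)\ge h_{1,N}(W),
\]
with $h_{1,N}$ as in \eqref{eq:h1N}. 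This is an instance of Lemma~\ref{lem:min}, since all coefficients are positive and all exponents exceed $1$.

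\textbf{Step 3: positivity and conclusion.} By H4 the data are nonnegative and not both identically zero, so $W_0>0$. Lemma~\ref{lem:min}(ii) then gives $h_{1,N}(W_0)>0=\lambda W_0$ with $\lambda=0$, so the threshold condition \eqref{eq:thresholdabstract} holds automatically. This is exactly why blowup occurs for every nonzero datum. Applying Lemma~\ref{lem:osgood} to $S=W$ on $[0,\Tmax)$ gives
\[
\Tmax\le\int_{W_0}^\infty\frac{\dd s}{h_{1,N}(s)},
\]
and this integral is finite by Lemma~\ref{lem:min}. In particular $\Tmax<\infty$.

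If $\Tmax=\infty$ were possible, the same lemma applied on $[0,T)$ for every finite $T$ would bound $T$ uniformly, which is a contradiction; so finiteness comes out of the argument rather than needing a separate case. Lemma~\ref{lem:osgood} requires $S>0$, and this holds because $W$ is nondecreasing ($W'\ge h\ge0$).

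\textbf{Main obstacle.} The only delicate point is Step~1: rigorously differentiating the masses down to $t=0$ and making sure the boundary flux vanishes. I would handle it through the integrated form on $[s,t]$ with a limit $s\downarrow0$. Alternatively, one can use the Duhamel representation with the Neumann semigroup, which preserves $\int_\Om$, mirroring the proof of Lemma~\ref{lem:diridentity}; this gives $U(t)=U(0)+\int_0^t\int_\Om F$ directly.
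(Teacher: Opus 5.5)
Your proposal is correct and follows the paper's route. You apply Jensen's inequality to get \eqref{eq:A1NU}--\eqref{eq:A1NV}, sum them to obtain $W'\ge P_{1,N}(V)+Q_{1,N}(U)\ge h_{1,N}(W)$, and invoke Lemma~\ref{lem:osgood} with $\lambda=0$, where the threshold holds automatically because $h_{1,N}(W_0)>0$. Your extra care over the mass identity down to $t=0$, the monotonicity and positivity of $W$, and the case $\Tmax=\infty$ supplies details the paper leaves implicit.
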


{\bf Proof:} Using \eqref{eq:A1NU}--\eqref{eq:A1NV}, we obtain
\[
 W'\ge P_{1,N}(V)+Q_{1,N}(U)\ge h_{1,N}(W).
\]
Since $W_0>0$, by the results of  Lemmas~\ref{lem:min} and Lemmas~\ref{lem:osgood} with $\lambda=0$,  $W$ cannot remain finite beyond the right-hand side of \eqref{eq:A1Nbound}. That is, every classical solution has finite mass before its maximal time $\Tmax$ which is bounded by the finite integral in \eqref{eq:A1Nbound}.
\hfill $\Box$

{\bf Case 2.} Under Assumption 2 and homogeneous Dirichlet boundary conditions.

Since $\varphi_1\dd x$ is a probability measure, using weighted Jensen's inequality, we 
have
\[
 \int_\Om v^{p_1}\varphi_1\dd x\ge Y^{p_1},
 \qquad
 \int_\Om u^{q_1}\varphi_1\dd x\ge X^{q_1}.
\]
Moreover,
\[
 Y\le\phinorm V,\qquad X\le\phinorm U,\qquad V\ge\phinorm^{-1}Y, \qquad U\ge\phinorm^{-1}X.
\]
Using Lemma~\ref{lem:diridentity}, we get
\begin{align*}
 X'&\ge-\lambda_1X+a_1^*Y^{p_1}+b_1^*\phinorm^{-p_2}Y^{p_2},\\
 Y'&\ge-\lambda_1Y+a_2^*X^{q_1}+b_2^*\phinorm^{-q_2}X^{q_2}.
\end{align*}
Define
\begin{align}
 h_{1,D}(s)=\min_{0\le x\le s}\{&a_1^*(s-x)^{p_1}
 +b_1^*\phinorm^{-p_2}(s-x)^{p_2}\notag\\
 &+a_2^*x^{q_1}+b_2^*\phinorm^{-q_2}x^{q_2}\}.\label{eq:h1D}
\end{align}

\begin{theorem}\label{thm:A1D}
Assume thatH1--H4 hold. Then under Assumption 1 and homogeneous Dirichlet boundary conditions,  for every nonzero nonnegative initial datum, the solution has finite maximal existence time. If
\begin{equation}\label{eq:A1Dthreshold}
 h_{1,D}(S_0)>\lambda_1S_0,
\end{equation}
then $\Tmax<\infty$ and
\begin{equation}\label{eq:A1Dbound}
 \Tmax\le\int_{S_0}^{\infty}
 \frac{\dd s}{h_{1,D}(s)-\lambda_1s}.
\end{equation}
\end{theorem}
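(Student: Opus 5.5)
The plan is to reduce the statement to Theorem~\ref{thm:abstractD} with $\lambda=\lambda_1$, $P(y)=a_1^*y^{p_1}+b_1^*\phinorm^{-p_2}y^{p_2}$ and $Q(x)=a_2^*x^{q_1}+b_2^*\phinorm^{-q_2}x^{q_2}$, so that the function $h$ of \eqref{eq:hdef} coincides with $h_{1,D}$ in \eqref{eq:h1D}. Lemma~\ref{lem:min} then applies directly: all coefficients are positive and all exponents exceed $1$. The main work is justifying the two differential inequalities for the weighted moments $X,Y$ from \eqref{eq:weightedmoments}.

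\emph{Step 1: the moment inequalities.} Proposition~\ref{prop:local} gives a nonnegative classical solution on $[0,\Tmax)$. By Lemma~\ref{lem:diridentity}, $X'=-\lambda_1X+\int_\Om F\varphi_1\dd x$. Inserting \eqref{eq:A1F} and $a_1\ge a_1^*$, and applying Jensen's inequality for the probability measure $\varphi_1\dd x$ to the convex map $v\mapsto v^{p_1}$, gives $\int_\Om a_1v^{p_1}\varphi_1\dd x\ge a_1^*Y^{p_1}$. For the nonlocal part, $\int_\Om\varphi_1\dd x=1$ gives $\int_\Om b_1^*V^{p_2}\varphi_1\dd x=b_1^*V^{p_2}$, and $Y\le \phinorm V$ gives $V^{p_2}\ge \phinorm^{-p_2}Y^{p_2}$. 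The symmetric argument handles $Y'$. Since $X,Y$ are absolutely continuous (by the Duhamel representation in Lemma~\ref{lem:diridentity}), this yields exactly the hypotheses of Theorem~\ref{thm:abstractD}.

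\emph{Step 2: the bound.} If $h_{1,D}(S_0)>\lambda_1S_0$, Theorem~\ref{thm:abstractD} gives \eqref{eq:A1Dbound}. The logic here needs some care. Lemma~\ref{lem:osgood} bounds the length of any interval on which $S$ exists and stays finite. Since $S(t)\le \phinorm(U+V)\le \phinorm D(\norm{u}_\infty+\norm{v}_\infty)$ is finite for every $t<\Tmax$, we may take $T=\Tmax$ there. This gives $\Tmax\le\int_{S_0}^\infty \dd s/(h_{1,D}(s)-\lambda_1 s)<\infty$.

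\emph{Main obstacle.} The hard part is the first assertion, that \emph{every} nonzero nonnegative datum blows up, even when \eqref{eq:A1Dthreshold} fails. I would handle it by waiting until the threshold is met. By Lemma~\ref{lem:min}(iii)--(iv), $h_{1,D}(s)/s\to\infty$, so some $s^\sharp$ satisfies $h_{1,D}(s)>\lambda_1 s$ for all $s\ge s^\sharp$. It then suffices to show that $S(t_0)\ge s^\sharp$ for some $t_0<\Tmax$, after which Step 2 applies from time $t_0$.

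The mechanism that forces $S$ to grow is the nonlocal term, which is spatially uniform and so does not vanish at the boundary. Let $W=U+V$ and integrate the equation for $u$ over $\Om$. With Dirichlet data the boundary flux $\int_{\partial\Om}\partial_\nu u\le 0$, so it only removes mass; since $\partial_\nu u=\nabla u\cdot\nu$ is not controlled by $W$, I would not try to bound it directly. Instead I would use comparison: by H3 and the lower bound $F\ge b_1^*V^{p_2}$, $u$ dominates the solution of the heat equation with source $b_1^*V(t)^{p_2}$. Duhamel's formula then gives $u(t)\ge b_1^*\int_0^t V(\tau)^{p_2}\,S_D(t-\tau)\mathbf 1\,\dd\tau$. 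Pairing with $\varphi_1$ and using $\int_\Om S_D(t-\tau)\mathbf 1\,\varphi_1\dd x=e^{-\lambda_1(t-\tau)}$ yields
\[
 X(t)\ge e^{-\lambda_1 t}X(0)+b_1^*\int_0^t e^{-\lambda_1(t-\tau)}V(\tau)^{p_2}\dd\tau,
\]
and similarly for $Y$. With $V\ge\phinorm^{-1}Y$ this is only a superlinear integral inequality, which by itself does not force blowup from small data. For small data I therefore expect this step to be genuinely delicate, and possibly false as stated. My fallback is to prove blowup under the threshold \eqref{eq:A1Dthreshold}, or for data large in the $\varphi_1$-moment, and to flag the unconditional claim as requiring $S_0$ beyond $s^\sharp$.
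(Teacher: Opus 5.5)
Your Steps 1--2 are exactly the paper's argument. Lemma~\ref{lem:diridentity}, Jensen's inequality for the probability measure $\varphi_1\dd x$, and $Y\le\phinorm V$ give the two weighted-moment inequalities whose minimum function is $h_{1,D}$. Theorem~\ref{thm:abstractD} then yields \eqref{eq:A1Dbound} under \eqref{eq:A1Dthreshold}. That conditional statement is all the paper's one-line proof establishes; its Table~\ref{tab:matrix} likewise lists the Dirichlet conclusion as conditional.

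Your doubt about the unconditional first sentence is justified, and the sentence is in fact false. Take $F=a_1^*v^{p_1}+b_1^*V^{p_2}$ and $G=a_2^*u^{q_1}+b_2^*U^{q_2}$, which satisfy H1--H3 and Assumption~1. Let $\psi$ solve $-\Delta\psi=1$ in $\Om$, $\psi=0$ on $\partial\Om$. Then $(\delta\psi,\delta\psi)$ is a stationary supersolution once $a_1^*(\delta\norm{\psi}_\infty)^{p_1}+b_1^*(\delta\int_\Om\psi\dd x)^{p_2}\le\delta$ and the analogous inequality for $G$ hold. Both hold for small $\delta$ because all exponents exceed $1$. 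By the maximum principle $\varphi_1\le\lambda_1\phinorm\psi$. Hence for $\varepsilon\le\delta/(\lambda_1\phinorm)$ the data $u_0=v_0=\varepsilon\varphi_1$ lie below $\delta\psi$. Comparison for this cooperative nonlocal system keeps $u,v\le\delta\psi$, so $\Tmax=\infty$.

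So the threshold version you fall back on is the correct theorem, and your proof of it is complete.
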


{\bf Proof:}
Applying Theorem~\ref{thm:abstractD} to the two weighted-moment inequalities above, we can obtian the conclusion.
\hfill $\Box$

{\bf Case 3. } Under Assumption 2 and homogeneous Neumann boundary conditions.

Under Assumption 2, we have
\[
 U'\ge Dc_1V^\alpha,\qquad V'\ge Dc_2U^\beta.
\]
Define
\begin{equation}\label{eq:h2N}
 h_{2,N}(s)=D\min_{0\le z\le s}
 \{c_1(s-z)^\alpha+c_2z^\beta\}.
\end{equation}

Then we have
\begin{theorem}\label{thm:A2N}
Assume that H1--H4 hold. Then under Assumption 2 and homogeneous Neumann boundary conditions, for every nonzero nonnegative initial datum, the solution has finite maximal existence time and
\begin{equation}\label{eq:A2Nbound}
 \Tmax\le\int_{W_0}^{\infty}\frac{\dd s}{h_{2,N}(s)}.
\end{equation}
\end{theorem}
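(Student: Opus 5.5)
The plan mirrors the proof of Theorem~\ref{thm:A1N}, with the mass functional $W(t)=U(t)+V(t)$ and $\lambda=0$. First I would justify the mass identities. Under homogeneous Neumann conditions, integrate the first equation of \eqref{eq:system} over $\Om$; the divergence theorem gives $\int_\Om\Delta u\dd x=\int_{\partial\Om}\partial_\nu u\dd\sigma=0$. Hence
\[
 U'(t)=\int_\Om F\bigl(x,t,u,v,V(t)\bigr)\dd x,
 \qquad V'(t)=\int_\Om G\bigl(x,t,u,v,U(t)\bigr)\dd x.
\]
To avoid regularity issues at $t=0$ or at the boundary, I would derive these in integrated form from Duhamel's formula with the Neumann semigroup, which preserves mass, exactly as in Lemma~\ref{lem:diridentity} with $\varphi_1$ replaced by the constant $1$ and $\lambda_1=0$. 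This shows $U,V$ are $C^1$ on $(0,\Tmax)$ and continuous at $0$.

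Next, the lower bound in \eqref{eq:A2F}--\eqref{eq:A2G} is independent of $x$, so integrating over $\Om$ gives $U'\ge Dc_1V^\alpha$ and $V'\ge Dc_2U^\beta$; no Jensen step is needed here. Adding these yields
\[
 W'\ge Dc_1V^\alpha+Dc_2U^\beta .
\]
Since $U,V\ge0$ by Proposition~\ref{prop:local} and $V=W-U$ with $0\le U\le W$, the definition \eqref{eq:h2N} gives $W'\ge h_{2,N}(W)$. Also $h_{2,N}$ is of the form in Lemma~\ref{lem:min}, with $P(y)=Dc_1y^\alpha$, $Q(z)=Dc_2z^\beta$, $m=n=1$ and exponents $\alpha,\beta>1$.

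Then I would check $W_0>0$: by H4 the data are nonnegative, continuous and not both zero, so $W_0=U(0)+V(0)>0$. I would apply Lemma~\ref{lem:osgood} with $\lambda=0$. The threshold $h_{2,N}(W_0)>0$ holds automatically by Lemma~\ref{lem:min}(ii), which is why no smallness or largeness condition appears. The lemma gives $W(t)\ge W_0>0$, so $W$ maps into $(0,\infty)$ as required, and for every $T'<\Tmax$ it gives $T'\le\int_{W_0}^\infty\dd s/h_{2,N}(s)$, which is finite by Lemma~\ref{lem:min}. Letting $T'\uparrow\Tmax$ yields \eqref{eq:A2Nbound}, and in particular $\Tmax<\infty$.

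The main obstacle is the logical link between the ODE bound and $\Tmax$. The argument only shows that the classical solution cannot exist past the integral bound: if $\Tmax$ exceeded it, $W$ would be finite and $C^1$ on a longer interval, which contradicts Lemma~\ref{lem:osgood}. This is the right direction, so no appeal to \eqref{eq:cont} is needed. The remaining care is applying Lemma~\ref{lem:osgood} on $[0,T')$ with local absolute continuity of $W$ up to $t=0$, which the integrated Duhamel identity provides.
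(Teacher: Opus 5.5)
Your proposal is correct and follows the paper's route: you integrate the lower bounds in \eqref{eq:A2F}--\eqref{eq:A2G} over $\Om$ using mass conservation under Neumann conditions, add them to get $W'\ge h_{2,N}(W)$, and apply Lemma~\ref{lem:osgood} with $\lambda=0$, where the threshold is automatic by Lemma~\ref{lem:min}(ii); your justifications of the mass identities and of $W_0>0$ fill in details the paper leaves implicit. One cosmetic fix: positivity of $W$ is a hypothesis of Lemma~\ref{lem:osgood}, not a conclusion you can borrow, so get it directly from $W'\ge h_{2,N}(W)\ge 0$, which makes $W$ nondecreasing and hence $W\ge W_0>0$.
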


{\bf Proof:} Since $W'\ge h_{2,N}(W)$ and $W_0>0$, using Lemmas~\ref{lem:min} and \ref{lem:osgood}, we can get \eqref{eq:A2Nbound}. 
\hfill $\Box$

{\bf Case 4. } Under Assumption 2 and homogeneous Dirichlet boundary conditions

Under Assumption 2, using Lemma~\ref{lem:diridentity}, we get
\[
 X'\ge-\lambda_1X+c_1\phinorm^{-\alpha}Y^\alpha,
 \qquad
 Y'\ge-\lambda_1Y+c_2\phinorm^{-\beta}X^\beta.
\]
Let
\[
 A=c_1\phinorm^{-\alpha},\qquad B=c_2\phinorm^{-\beta},
\]
and
\begin{equation}\label{eq:h2D}
 h_{2,D}(s)=\min_{0\le z\le s}\{A(s-z)^\alpha+Bz^\beta\}.
\end{equation}

\begin{theorem}\label{thm:A2D}
Assume that H1--H4 hold. Then under Assumption 2 and homogeneous Neumann boundary conditions, for every nonzero nonnegative initial datum, the solution has finite maximal existence time. If
\begin{equation}\label{eq:A2Dthreshold}
 h_{2,D}(S_0)>\lambda_1S_0,
\end{equation}
then $\Tmax<\infty$ and
\begin{equation}\label{eq:A2Dbound}
 \Tmax\le\int_{S_0}^{\infty}
 \frac{\dd s}{h_{2,D}(s)-\lambda_1s}.
\end{equation}
\end{theorem}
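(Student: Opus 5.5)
The statement is worded for homogeneous Neumann conditions, and I will prove it in exactly that setting. It makes two claims. First, $\Tmax<\infty$ for every nonzero nonnegative initial datum. Second, under the threshold \eqref{eq:A2Dthreshold}, with $S_0=X(0)+Y(0)$ the $\varphi_1$-weighted moments of \eqref{eq:weightedmoments}, the quantitative bound \eqref{eq:A2Dbound} holds.

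The first claim needs no new work. Under Assumption~2 with Neumann conditions it is exactly the conclusion of Theorem~\ref{thm:A2N}, since $W_0>0$ by H4 and $\int_{W_0}^\infty \dd s/h_{2,N}(s)<\infty$ by Lemma~\ref{lem:min}.

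For the second claim, the key point is that Lemma~\ref{lem:diridentity} was derived for Dirichlet data, so I replace it by a one-sided Neumann analogue. I claim that, with $\varphi_1$ still the Dirichlet eigenfunction,
\[
 X'(t)\ge-\lambda_1X(t)+\int_\Om F(t)\varphi_1\dd x,\qquad Y'(t)\ge-\lambda_1Y(t)+\int_\Om G(t)\varphi_1\dd x .
\]
Formally this follows from Green's formula:
\[
 \int_\Om \Delta u\,\varphi_1\dd x=\int_\Om u\,\Delta\varphi_1\dd x+\int_{\partial\Om}\bigl(\varphi_1\partial_\nu u-u\,\partial_\nu\varphi_1\bigr)\dd S=-\lambda_1X-\int_{\partial\Om}u\,\partial_\nu\varphi_1\dd S .
\]
Here $\varphi_1=0$ on $\partial\Om$ and $\partial_\nu u=0$. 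Moreover $\partial_\nu\varphi_1\le0$ (by Hopf, since $\varphi_1>0$ inside and vanishes on the boundary) and $u\ge0$ by Proposition~\ref{prop:local}, so the boundary term is nonnegative. The same computation applies to $v$.

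This justification is the main obstacle, because Definition~2.1 only gives $C^{2,1}$ regularity in the interior. I would first try to obtain smoothness up to the boundary for $t>0$ from the Schauder theory behind Proposition~\ref{prop:local}, using the compatibility in H4. Failing that, I would integrate over $\Om_\varepsilon=\{\mathrm{dist}(x,\partial\Om)>\varepsilon\}$ and let $\varepsilon\to0$. A further alternative is to argue in integrated form as in Lemma~\ref{lem:diridentity}: test the equation against $e^{-\lambda_1(t-\tau)}\varphi_1$ and obtain the integral version of the inequality, which is all that the proofs of Lemma~\ref{lem:osgood} and Theorem~\ref{thm:abstractD} actually use.

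With the inequalities in hand, the rest follows Case~4. By \eqref{eq:A2F} and $\int_\Om\varphi_1\dd x=1$, we get $\int_\Om F\varphi_1\dd x\ge c_1V^\alpha$. Since $Y\le\phinorm V$, this gives $\int_\Om F\varphi_1\dd x\ge c_1\phinorm^{-\alpha}Y^\alpha=AY^\alpha$, and similarly $\int_\Om G\varphi_1\dd x\ge BX^\beta$. Hence
\[
 X'\ge-\lambda_1X+AY^\alpha,\qquad Y'\ge-\lambda_1Y+BX^\beta ,
\]
which is the hypothesis of Theorem~\ref{thm:abstractD} with $\lambda=\lambda_1$, $P(y)=Ay^\alpha$ and $Q(z)=Bz^\beta$; the corresponding $h$ is $h_{2,D}$ from \eqref{eq:h2D}. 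Adding the two inequalities gives $S'\ge-\lambda_1S+h_{2,D}(S)$. Under \eqref{eq:A2Dthreshold}, Lemma~\ref{lem:osgood}, applied on every $[0,T)$ with $T<\Tmax$, gives $T\le\int_{S_0}^\infty \dd s/(h_{2,D}(s)-\lambda_1 s)$. Letting $T\uparrow\Tmax$ yields \eqref{eq:A2Dbound}, and the finiteness of this integral again shows $\Tmax<\infty$.
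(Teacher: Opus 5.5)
Your argument is correct, but it is not the paper's route. The paper's one-line proof (apply Theorem~\ref{thm:abstractD}) belongs to Case~4. That section's heading and the matching row of Table~\ref{tab:matrix} show that ``Neumann'' in the statement is a slip for ``Dirichlet''. There the moment inequalities come from Lemma~\ref{lem:diridentity}, an exact identity obtained from Duhamel's formula and $S_D(t)\varphi_1=e^{-\lambda_1t}\varphi_1$, which is available only for Dirichlet data.

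You took the wording literally and made two substitutions:
\begin{itemize}
\item you replaced that identity by the one-sided inequality $X'\ge-\lambda_1X+\int_\Om F\varphi_1\dd x$, paying for the Neumann boundary with the sign $-\int_{\partial\Om}u\,\partial_\nu\varphi_1\dd S\ge0$;
\item you took the unconditional first sentence from Theorem~\ref{thm:A2N}.
\end{itemize}
Your inequality holds under both boundary conditions, since the boundary term vanishes under Dirichlet. So \eqref{eq:A2Dbound} is proved in either reading.

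The first sentence of the theorem, however, is justified only in the Neumann reading. The paper's argument gives $\Tmax<\infty$ only under \eqref{eq:A2Dthreshold}. For Dirichlet conditions the unconditional claim is false in general: for $F=c_1V^\alpha$, $G=c_2U^\beta$, the pair $(\epsilon\psi,\epsilon\psi)$ with $-\Delta\psi=1$ in $\Om$ and $\psi=0$ on $\partial\Om$ is a bounded stationary supersolution for small $\epsilon>0$. Hence small data yield global solutions.

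What the paper's route has in its favour is a semigroup derivation that needs no boundary regularity. You can get the same for your inequality, more cleanly than by exhausting $\Om$ with $\Om_\varepsilon$. By the maximum principle, $S_N(t)w\ge S_D(t)w$ for $w\ge0$, where $S_N$ is the Neumann heat semigroup. Hence $\int_\Om (S_N(t)w)\varphi_1\dd x\ge e^{-\lambda_1t}\int_\Om w\varphi_1\dd x$. Inserting this into Duhamel's formula and using $u\ge0$, $F\ge0$ gives
\[
 X(t)\ge e^{-\lambda_1(t-s)}X(s)+\int_s^t e^{-\lambda_1(t-\tau)}\int_\Om F(\tau)\varphi_1\dd x\dd\tau .
\]
This is the mild form of your inequality, obtained exactly as the paper obtains Lemma~\ref{lem:diridentity}.
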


{\bf Proof:}
Applying Theorem~\ref{thm:abstractD}, we can obtain the conclusion.
\hfill $\Box$

Especially, when the two mass exponents in Assumption 2 coincide, we can obtain some closed forms. To do this, we need the following lemma

\begin{lemma}\label{lem:eqmin}
Let $A,B>0$ and $r>1$. Then
\begin{equation}\label{eq:eqmin}
 \min_{0\le z\le s}\{A(s-z)^r+Bz^r\}
 =\left(A^{-1/(r-1)}+B^{-1/(r-1)}\right)^{-(r-1)}s^r.
\end{equation}
For $s>0$ the minimizer is unique and satisfies
\[
 Bz^{r-1}=A(s-z)^{r-1}.
\]
\end{lemma}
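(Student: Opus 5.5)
For $s=0$ both sides vanish, so I fix $s>0$ and study $f(z)=A(s-z)^r+Bz^r$ on $[0,s]$. The plan is to use strict convexity to get a unique interior critical point, solve the first-order condition explicitly, and then evaluate $f$ there in closed form.

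\emph{Convexity and location of the minimizer.} Since $r>1$, $f$ is strictly convex on $[0,s]$:
\[
f''(z)=r(r-1)\bigl(A(s-z)^{r-2}+Bz^{r-2}\bigr)>0 \quad\text{on } (0,s),
\]
with continuity at the endpoints. The minimizer therefore exists by Lemma~\ref{lem:min}(i) and is unique. From
\[
f'(z)=-rA(s-z)^{r-1}+rBz^{r-1}
\]
we get $f'(0)=-rAs^{r-1}<0$ and $f'(s)=rBs^{r-1}>0$. Because $f'$ is continuous and strictly increasing, it has a unique zero $z^*\in(0,s)$, and this zero is the minimizer. At $z^*$ the condition $f'(z^*)=0$ is exactly $Bz^{r-1}=A(s-z)^{r-1}$.

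\emph{Solving the first-order condition.} Set $a=A^{-1/(r-1)}$ and $b=B^{-1/(r-1)}$. Taking $(r-1)$-th roots gives $z/(s-z)=(A/B)^{1/(r-1)}=b/a$. Hence
\[
z^*=\frac{b\,s}{a+b},\qquad s-z^*=\frac{a\,s}{a+b}.
\]

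\emph{Evaluating the minimum.} Substituting and using $A=a^{-(r-1)}$, $B=b^{-(r-1)}$:
\[
f(z^*)=\frac{s^r}{(a+b)^r}\bigl(A a^r+B b^r\bigr)=\frac{s^r}{(a+b)^r}(a+b)=(a+b)^{-(r-1)}s^r.
\]
This is exactly \eqref{eq:eqmin}.

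The only step needing care is the exponent bookkeeping in this final evaluation; it is routine once $a$ and $b$ are introduced. There is no genuine obstacle.
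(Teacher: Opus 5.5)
Your proof is correct and follows the paper's route: strict convexity gives a unique critical point, and the substitution $a=A^{-1/(r-1)}$, $b=B^{-1/(r-1)}$ solves the first-order condition as $z=bs/(a+b)$; substituting back, using $Aa^r=a$ and $Bb^r=b$, gives $(a+b)^{-(r-1)}s^r$. You also supply details the paper leaves implicit, namely the $s=0$ case and the sign check $f'(0)<0<f'(s)$ that places the critical point in the interior.
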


{\bf Proof:}  By strict convexity, there exists a unique critical point. Let $a=A^{-1/(r-1)}$ and $b=B^{-1/(r-1)}$. The critical-point relation yields
\[
 s-z=\frac{a}{a+b}s,\qquad z=\frac{b}{a+b}s.
\]
Substituting them into (\ref{eq:eqmin}), we get the conclusion.\hfill $\Box$

Now we give a corollary as follows.
\begin{corollary}\label{cor:eq}
Assume that $\alpha=\beta=r>1$ and define
\begin{equation}\label{eq:CN}
 C_N=\left(c_1^{-1/(r-1)}+c_2^{-1/(r-1)}\right)^{-(r-1)}.
\end{equation}
Then under homogeneous Neumann boundary conditions,
\[
 h_{2,N}(s)=DC_Ns^r,
\]
and
\begin{equation}\label{eq:eqNbound}
 \Tmax\le\frac{W_0^{1-r}}{(r-1)DC_N}.
\end{equation}
Under homogeneous Dirichlet boundary conditions, let
\begin{equation}\label{eq:CD}
 C_D=\phinorm^{-r}C_N.
\end{equation}
If
\begin{equation}\label{eq:eqDthreshold}
 C_DS_0^{r-1}>\lambda_1,
\end{equation}
then
\begin{equation}\label{eq:eqDbound}
 \Tmax\le
 \frac{1}{(r-1)\lambda_1}
 \log\!\left(
 \frac{C_DS_0^{r-1}}{C_DS_0^{r-1}-\lambda_1}
 \right).
\end{equation}
\end{corollary}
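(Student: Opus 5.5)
The plan is to reduce everything to Lemma~\ref{lem:eqmin} and then evaluate the Osgood integrals of Theorems~\ref{thm:A2N} and \ref{thm:A2D} in closed form.

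\emph{Neumann case.} Take $\alpha=\beta=r$ in \eqref{eq:h2N}. Factoring out $D$ and applying Lemma~\ref{lem:eqmin} with $A=c_1$, $B=c_2$ gives $h_{2,N}(s)=DC_Ns^r$ directly. Substituting this into \eqref{eq:A2Nbound} leaves an elementary integral:
\[
\int_{W_0}^\infty \frac{\dd s}{DC_Ns^r}=\frac{W_0^{1-r}}{(r-1)DC_N},
\]
and this is \eqref{eq:eqNbound}, since $r>1$ and $W_0>0$.

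\emph{Dirichlet case.} Again put $\alpha=\beta=r$, so $A=c_1\phinorm^{-r}$ and $B=c_2\phinorm^{-r}$ in \eqref{eq:h2D}. The common factor $\phinorm^{-r}$ can be pulled out of the constant in \eqref{eq:eqmin}, because $(A^{-1/(r-1)}+B^{-1/(r-1)})^{-(r-1)}$ is homogeneous of degree one in $(A,B)$. This gives $h_{2,D}(s)=C_Ds^r$. The threshold \eqref{eq:A2Dthreshold} then reads $C_DS_0^r>\lambda_1S_0$, which is exactly \eqref{eq:eqDthreshold}. Theorem~\ref{thm:A2D}, or equivalently Theorem~\ref{thm:abstractD}, now yields
\[
\Tmax\le\int_{S_0}^\infty\frac{\dd s}{C_Ds^r-\lambda_1 s}.
\]
To evaluate this, substitute $w=s^{1-r}$, so that $\dd w=(1-r)s^{-r}\dd s$. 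The integrand becomes
\[
\frac{s^{-r}\dd s}{C_D-\lambda_1 s^{1-r}}=\frac{1}{r-1}\,\frac{\dd w}{C_D-\lambda_1 w},
\]
with $w$ running from $S_0^{1-r}$ down to $0$. Integrating gives
\[
\frac{1}{(r-1)\lambda_1}\log\frac{C_D}{C_D-\lambda_1S_0^{1-r}}.
\]
Multiplying numerator and denominator by $S_0^{r-1}$ produces \eqref{eq:eqDbound}. The threshold \eqref{eq:eqDthreshold} guarantees that the logarithm's argument is positive and finite.

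\emph{Main obstacle.} The only point needing care is the homogeneity step that identifies $C_D=\phinorm^{-r}C_N$. I will check it explicitly: $A^{-1/(r-1)}=\phinorm^{r/(r-1)}c_1^{-1/(r-1)}$, and raising $\phinorm^{r/(r-1)}$ to the power $-(r-1)$ gives $\phinorm^{-r}$. The rest is routine calculus.
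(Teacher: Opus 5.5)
Your proposal is correct and follows the paper's own proof. Like the paper, you apply Lemma~\ref{lem:eqmin} to get $h_{2,N}(s)=DC_Ns^r$ and $h_{2,D}(s)=C_Ds^r$, then evaluate the Osgood integrals from Theorems~\ref{thm:A2N} and \ref{thm:A2D}; your homogeneity check and the substitution $w=s^{1-r}$ fill in details the paper leaves implicit. One cosmetic slip: $s^{-r}\dd s=-\frac{1}{r-1}\dd w$, and it is this minus sign together with the reversed limits that yields $\frac{1}{r-1}\int_0^{S_0^{1-r}}\frac{\dd w}{C_D-\lambda_1 w}$; your final value is nonetheless correct.
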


{\bf Proof:}
Since the definitions of $h_{2,N}$ and $h_{2,D}$ have been defined in Lemma~\ref{lem:eqmin}. The Neumann integral is elementary. In the Dirichlet case,
\[
 \int_{S_0}^{\infty}\frac{\dd s}{C_Ds^r-\lambda_1s}
 =\frac{1}{(r-1)\lambda_1}
 \log\!\left(
 \frac{C_DS_0^{r-1}}{C_DS_0^{r-1}-\lambda_1}
 \right),
\]
provided \eqref{eq:eqDthreshold} holds.
\hfill $\Box$

\section{Lower bounds for blowup time under Assumption 2}\label{sec:lower}

\qquad In this section, we first establish lower bounds for blowup time by using the right parts of inequalities in Assumption 2, then we can obtain 
two-side bounds for the blowup time by combining the left parts of inequalities in Assumption 2  as well as the results in Theorem \ref{thm:A2N} and Theorem \ref{thm:A2D}.

Let
\[
 \mathcal U(t)=\norm{u(\cdot,t)}_{L^\infty(\Om)},\qquad
 \mathcal V(t)=\norm{v(\cdot,t)}_{L^\infty(\Om)},\qquad
 Z(t)=\mathcal U(t)+\mathcal V(t).
\]
Denote $S_B(t)$ the heat semigroup for either homogeneous Neumann or homogeneous Dirichlet boundary conditions. It is positivity preserving and an $L^\infty$ contraction.  Using Duhamel's formula and Assumption 2, we get
\[
 D^+\mathcal U(t)\le\norm{F(t)}_\infty,
 \qquad
 D^+\mathcal V(t)\le\norm{G(t)}_\infty
\]
for $0<h<\Tmax-t$, where $D^+$ denotes the upper right Dini derivative. Since $V\le D\mathcal V\le DZ$ and $U\le D\mathcal U\le DZ$, we have
\begin{align*}
 \norm{F(t)}_\infty&\le C_1D^\alpha Z^\alpha+A_1^*Z^\rho+b_1,\\
 \norm{G(t)}_\infty&\le C_2D^\beta Z^\beta+A_2^*Z^\sigma+b_2.
\end{align*}
Define
\begin{equation}\label{eq:Hinf}
 H_\infty(s)=C_1D^\alpha s^\alpha+C_2D^\beta s^\beta
 +A_1^*s^\rho+A_2^*s^\sigma+b_1+b_2.
\end{equation}
Then
\begin{equation}\label{eq:ZDini}
 D^+Z(t)\le H_\infty(Z(t)).
\end{equation}

And we have
\begin{theorem}\label{thm:lowerLinf}
Assume that H1--H4 hold. Then under Assumption 2, with either homogeneous Dirichlet or homogeneous Neumann boundary conditions,
\begin{equation}\label{eq:lowerLinf}
 \Tmax\ge\int_{Z_0}^{\infty}\frac{\dd s}{H_\infty(s)},
 \qquad
 Z_0=\norm{u_0}_\infty+\norm{v_0}_\infty>0.
\end{equation}
\end{theorem}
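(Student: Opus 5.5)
The plan is a comparison argument for the scalar Dini differential inequality \eqref{eq:ZDini}. It follows the standard Payne--Schaefer pattern: integrate $1/H_\infty$ along the trajectory of $Z$ and use the blowup alternative of Proposition~\ref{prop:local}.

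\emph{Reduction.} If $\Tmax=\infty$ there is nothing to prove, so assume $\Tmax<\infty$. By \eqref{eq:cont}, $\limsup_{t\uparrow\Tmax}Z(t)=\infty$. The function $Z$ is continuous on $[0,\Tmax)$ because $u,v\in C(\overline\Om\times[0,\Tmax))$. Also $Z(t)\ge0$, and $Z_0>0$ by H4.

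\emph{Key step.} Define
\[
\Phi(r)=\int_{Z_0}^{r}\frac{\dd s}{H_\infty(s)},\qquad r\ge 0.
\]
Since $H_\infty>0$ on $(0,\infty)$ (its leading terms are positive), $\Phi$ is $C^1$ and increasing there, with $\Phi'(r)=1/H_\infty(r)$. If $b_1+b_2=0$ and $Z$ touches $0$, the integrand may be singular at $s=0$. This is harmless, because we only evaluate $\Phi$ along $Z(t)$ at times when $Z(t)\ge Z_0/2$ or larger. Alternatively, we restrict attention to the last time $Z$ equals $Z_0$ before reaching any level $R>Z_0$.

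Fix $R>Z_0$. Continuity of $Z$ together with $\limsup Z=\infty$ gives a first time $t_R<\Tmax$ with $Z(t_R)=R$, and we let $\tau_R=\sup\{t<t_R: Z(t)\le Z_0\}$. On $[\tau_R,t_R]$ we have $Z\ge Z_0>0$. By the chain rule for Dini derivatives composed with a $C^1$ increasing function,
\[
D^+\bigl(\Phi(Z(t))\bigr)=\Phi'(Z(t))\,D^+Z(t)\le 1
\]
on this interval. A continuous function whose upper right Dini derivative is at most $1$ everywhere satisfies $g(t)-g(\tau)\le t-\tau$; this is the standard monotonicity lemma for Dini derivatives, applied to $g(t)-t$. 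Hence
\[
\Phi(R)=\Phi(Z(t_R))-\Phi(Z(\tau_R))\le t_R-\tau_R<\Tmax .
\]
Letting $R\to\infty$ gives \eqref{eq:lowerLinf}.

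\emph{Justifying \eqref{eq:ZDini}, the main obstacle.} The step needing the most care is the inequality \eqref{eq:ZDini} itself, which the text sketches only briefly. I would write
\[
u(t+h)=S_B(h)u(t)+\int_t^{t+h}S_B(t+h-\tau)F(\tau)\dd\tau .
\]
The contraction property of $S_B$ then gives
\[
\mathcal U(t+h)\le\mathcal U(t)+\int_t^{t+h}\norm{F(\tau)}_\infty\dd\tau .
\]
Since $F(\tau)$ is continuous in $\tau$ in sup norm on compact subintervals of $[0,\Tmax)$ (this uses H1 and the continuity of $u,v$), dividing by $h$ and taking $\limsup$ yields $D^+\mathcal U(t)\le\norm{F(t)}_\infty$. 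The analogous bound holds for $\mathcal V$, and adding them uses the subadditivity of $D^+$.

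Finally, the bound $\norm{F(t)}_\infty\le C_1D^\alpha Z^\alpha+A_1^*Z^\rho+b_1$ follows from the upper inequality in \eqref{eq:A2F}. We need $F\ge0$ there, which holds by its lower bound, so that $|F|=F$. We also need $v^\rho\le \mathcal V^\rho\le Z^\rho$ and $V\le D\mathcal V\le DZ$, both valid by nonnegativity of the solution. The bound for $G$ is analogous, and together these give \eqref{eq:ZDini} with $H_\infty$ as in \eqref{eq:Hinf}.
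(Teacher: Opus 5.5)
Your proposal is correct and follows essentially the same route as the paper: it derives the scalar Dini inequality \eqref{eq:ZDini} from Duhamel's formula and the $L^\infty$-contractivity of $S_B$, then combines a comparison argument with the blowup alternative \eqref{eq:cont}. The only difference is that you integrate $1/H_\infty$ along $Z$ on an interval where $Z\ge Z_0$, instead of comparing $Z$ with the solution of $z'=H_\infty(z)$, $z(0)=Z_0$. This spares you the paper's claim that $H_\infty$ is positive and locally Lipschitz down to $s=0$ (the former fails when $b_1+b_2=0$, the latter when some $A_i^*>0$, since $\rho,\sigma<1$), and your justification of \eqref{eq:ZDini} fills in details the paper only sketches.
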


{\bf Proof:} Let $z$ solve
\[
 z'=H_\infty(z),\qquad z(0)=Z_0.
\]
Note that the function $H_\infty$ is continuous, locally Lipschitz, positive, and nondecreasing on $[0,\infty)$. Applying the standard Dini-derivative comparison to \eqref{eq:ZDini}, we get that $Z(t)\le z(t)$ while both are finite. The scalar blowup time of $z$ is
\[
 \tau_\infty=\int_{Z_0}^{\infty}\frac{\dd s}{H_\infty(s)},
\]
which is finite because $\alpha,\beta>1$. If $\Tmax<\tau_\infty$, then $Z(t)$ would remain bounded as $t\uparrow\Tmax$, contradicting the continuation criterion \eqref{eq:cont}. Hence \eqref{eq:lowerLinf} holds.
\hfill $\Box$

We will estimate for the lower bound for the blowup time of the solution when its $L^1$ norm become unbounded in finite time. Note that  diffusion conserves total mass under Neumann boundary conditions, while  the positive Dirichlet heat semigroup is sub-Markovian and decreases total mass of nonnegative data under Dirichlet boundary conditions. Since Assumption 2 implies that $F,G\ge0$, using Duhamel's formula we get 
\[
 D^+W(t)\le H_1(W(t))
\]
for either boundary condition, where 
\begin{equation}\label{eq:H1}
 H_1(s)=C_1Ds^\alpha+C_2Ds^\beta
 +A_1^*D^{1-\rho}s^\rho+A_2^*D^{1-\sigma}s^\sigma
 +(b_1+b_2)D.
\end{equation}

Then we can get

\begin{proposition}\label{prop:l1lower}
Assume that H1--H4 hold.  Under Assumption 2, with either homogeneous Dirichlet or homogeneous Neumann boundary conditions, if the maximal solution undergoes $L^1$ blow-up at a finite time $T$, then
\begin{equation}\label{eq:l1lower}
 T\ge\int_{W_0}^{\infty}\frac{\dd s}{H_1(s)}.
\end{equation}
\end{proposition}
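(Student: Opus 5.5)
The plan mirrors the proof of Theorem~\ref{thm:lowerLinf}, with the $L^1$ quantity $W=U+V$ in place of $Z$ and the differential inequality $D^+W\le H_1(W)$ in place of \eqref{eq:ZDini}. I would first make that inequality rigorous.

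\textbf{Step 1: the mass inequality.} Integrate Duhamel's formula
\[
u(t+h)=S_B(h)u(t)+\int_t^{t+h}S_B(t+h-\tau)F(\tau)\dd\tau
\]
over $\Om$. Nonnegativity of $u$ and of $F$ holds since, by Assumption 2, $F\ge c_1V^\alpha\ge0$. Together with the sub-Markov property $\int_\Om S_B(h)w\le\int_\Om w$ for $w\ge0$, this gives
\[
U(t+h)\le U(t)+\int_t^{t+h}\int_\Om F(\tau)\dd x\dd\tau .
\]
The sub-Markov property is an equality in the Neumann case. Next bound $\int_\Om F$ using the upper inequality in \eqref{eq:A2F}:
\[
\int_\Om F\le C_1DV^\alpha+A_1^*\int_\Om v^\rho+b_1D .
\]
Since $0<\rho<1$, Jensen's (concavity) inequality gives $\int_\Om v^\rho\le D^{1-\rho}V^\rho$. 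Doing the same for $V$ and $G$, and then bounding $V^\alpha\le W^\alpha$, $U^\beta\le W^\beta$, $V^\rho\le W^\rho$, $U^\sigma\le W^\sigma$ (monotonicity, as $U,V\le W$), yields
\[
W(t+h)-W(t)\le\int_t^{t+h}H_1(W(\tau))\dd\tau .
\]
Continuity of $\tau\mapsto W(\tau)$ on $[0,T)$ follows from the classical regularity in Definition~2.1. Dividing by $h$ and letting $h\downarrow0$ then gives $D^+W(t)\le H_1(W(t))$.

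\textbf{Step 2: comparison.} Let $w$ solve $w'=H_1(w)$ with $w(0)=W_0$. The function $H_1$ is positive and continuous on $(0,\infty)$, and it is nondecreasing and locally Lipschitz on $[W_0,\infty)$ because $W_0>0$. The possible non-Lipschitz behaviour of $s^\rho$ at $0$ is therefore irrelevant, since $w\ge W_0>0$. Its blow-up time is
\[
\tau_1=\int_{W_0}^\infty\frac{\dd s}{H_1(s)}<\infty,
\]
which is finite because $\alpha>1$. The standard Dini-derivative comparison then gives $W(t)\le w(t)$ on $[0,\min\{T,\tau_1\})$.

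To carry this out, take $\varepsilon>0$ and compare with $w_\varepsilon$ solving $w_\varepsilon'=H_1(w_\varepsilon)+\varepsilon$ with $w_\varepsilon(0)=W_0+\varepsilon$. At a first touching time $t_0$ one has $W(t_0)=w_\varepsilon(t_0)$, and then $D^+W(t_0)\le H_1(w_\varepsilon(t_0))<w_\varepsilon'(t_0)$. This is incompatible with touching from below, so $W<w_\varepsilon$ throughout; letting $\varepsilon\to0$ gives $W\le w$.

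\textbf{Step 3: conclusion.} If $T<\tau_1$, then $W$ stays bounded by $w(T)<\infty$ as $t\uparrow T$. This contradicts $\lim_{t\uparrow T}W(t)=\infty$, so $T\ge\tau_1$, which is \eqref{eq:l1lower}.

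The main obstacle is Step 1 in the Dirichlet case. There one must justify that the Dirichlet semigroup does not increase mass, which follows from positivity together with $S_D(t)1\le1$ by the maximum principle. One must also check that the Duhamel representation holds up to $t=0$ for continuous data, so that $W$ is continuous and the Dini inequality holds on all of $[0,T)$.
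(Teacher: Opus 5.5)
Your argument follows the paper's route. The paper also gets $D^+W\le H_1(W)$ from Duhamel's formula, $F,G\ge0$, and the fact that neither heat semigroup increases the mass of nonnegative data, and then compares with $w'=H_1(w)$. Your Step 1 spells this out; the Jensen step is where the factors $D^{1-\rho}$, $D^{1-\sigma}$ in $H_1$ come from. One small correction there: in the Dirichlet case, positivity and $S_D(h)1\le1$ alone give only the $L^\infty$ contraction. For the mass bound you also need the symmetry of $S_D$, namely $\int_\Om S_D(h)w\dd x=\int_\Om w\,S_D(h)1\dd x\le\int_\Om w\dd x$.

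The step that fails as written is the touching argument in Step 2. At a first touching time $t_0$ you know $W<w_\varepsilon$ on $[0,t_0)$, which is information to the left of $t_0$. The bound $D^+W(t_0)<w_\varepsilon'(t_0)$ only says $W<w_\varepsilon$ just to the right of $t_0$. These two facts are compatible (e.g.\ $W-w_\varepsilon=-|t-t_0|$ near $t_0$), so there is no contradiction. Either of two standard repairs, both available from what you already proved, closes the gap:
(a) Apply the integrated inequality of Step 1 on $[t_0-h,t_0]$. It gives $\limsup_{h\downarrow0}\bigl(W(t_0)-W(t_0-h)\bigr)/h\le H_1(W(t_0))=w_\varepsilon'(t_0)-\varepsilon$. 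Hence $W(t_0-h)>w_\varepsilon(t_0-h)$ for small $h>0$, contradicting the choice of $t_0$.
(b) Since $H_1$ is nondecreasing, $D^+(W-w_\varepsilon)\le-\varepsilon$ on all of $[0,t_0)$. A continuous function with $D^+\le0$ on $[a,b)$ is nonincreasing on $[a,b]$; applying this to $W-w_\varepsilon+\varepsilon t$ gives $(W-w_\varepsilon)(t_0)\le(W-w_\varepsilon)(0)-\varepsilon t_0<0$.
With either fix, Steps 2 and 3 go through and give the stated lower bound.
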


{\bf Proof:} Let $w$ solve $w'=H_1(w)$, $w(0)=W_0$. Using Dini's comparison principle, we get $W(t)\le w(t)$ when $w$ is finite. The scalar blowup time is bounded by the right-hand side of \eqref{eq:l1lower}. 
\hfill $\Box$

Combining the results in Section 3 with Theorem~\ref{thm:lowerLinf}, we can immediately obtain two-sided estimates below.

\begin{corollary}\label{cor:twosided}
Assume H1--H4 and Assumption 2 hold. 
\begin{enumerate}[label=\textnormal{(\roman*)}]
\item Under homogeneous Neumann boundary conditions,
\begin{equation}\label{eq:twosidedN}
 \int_{Z_0}^{\infty}\frac{\dd s}{H_\infty(s)}
 \le \Tmax
 \le \int_{W_0}^{\infty}\frac{\dd s}{h_{2,N}(s)}.
\end{equation}
\item Under homogeneous Dirichlet boundary conditions, if $h_{2,D}(S_0)>\lambda_1S_0$, then
\begin{equation}\label{eq:twosidedD}
 \int_{Z_0}^{\infty}\frac{\dd s}{H_\infty(s)}
 \le \Tmax
 \le \int_{S_0}^{\infty}\frac{\dd s}{h_{2,D}(s)-\lambda_1s}.
\end{equation}
\end{enumerate}
If, in addition, $L^1$ blow-up occurs at the maximal time, Proposition~\ref{prop:l1lower} supplies the further lower bound \eqref{eq:l1lower}.
\end{corollary}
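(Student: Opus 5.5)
The corollary is a direct juxtaposition of bounds already established, so the plan is simply to check that the hypotheses of each ingredient hold simultaneously. For the lower bounds I will invoke Theorem~\ref{thm:lowerLinf}. It needs H1--H4 and Assumption 2, and it holds for either boundary condition. It gives $\Tmax\ge\int_{Z_0}^\infty \dd s/H_\infty(s)$, with $Z_0>0$ by H4. This inequality is trivially true if $\Tmax=\infty$; in our setting, however, the upper bounds will show that $\Tmax<\infty$, so the lower bound is informative.

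For the upper bound in (i), I will apply Theorem~\ref{thm:A2N}. Here I should confirm that $W_0=U(0)+V(0)>0$. This follows from H4: $u_0,v_0\ge0$, they are continuous, and they are not both identically zero, so at least one has positive integral. By Lemma~\ref{lem:min}, the function $h_{2,N}$ is positive on $(0,\infty)$ and $1/h_{2,N}$ is integrable at infinity, so the right-hand side of \eqref{eq:twosidedN} is finite. Chaining the two inequalities gives \eqref{eq:twosidedN}.

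For (ii), I will use the Dirichlet upper bound, Theorem~\ref{thm:A2D}, which rests on Theorem~\ref{thm:abstractD} and the weighted moment inequalities of Case 4. Note that its statement mistakenly says ``Neumann'', but the proof is the Dirichlet one through Lemma~\ref{lem:diridentity}. The threshold $h_{2,D}(S_0)>\lambda_1S_0$ is exactly the assumption in (ii), and it forces $S_0>0$. Lemma~\ref{lem:osgood} then gives finiteness of the right-hand side of \eqref{eq:twosidedD} and the upper bound. Combined with Theorem~\ref{thm:lowerLinf}, this yields \eqref{eq:twosidedD}.

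The final sentence is immediate from Proposition~\ref{prop:l1lower} with $T=\Tmax$. The only point requiring care is the consistency of the time variable: Proposition~\ref{prop:l1lower} is stated for an $L^1$ blow-up time $T$. Since $U$ and $V$ are bounded by $D\,Z$ while the solution is classical, any $L^1$ blow-up time cannot precede $\Tmax$. So if $L^1$ blow-up occurs at the maximal time, we may set $T=\Tmax$.

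I expect no real obstacle. The step deserving the most attention is this identification of times, together with checking that $W_0$ and $S_0$ are strictly positive so that the integrals are well defined.
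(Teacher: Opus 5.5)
Your proposal is correct and is the same argument as the paper's, which simply combines Theorem~\ref{thm:lowerLinf} with Theorems~\ref{thm:A2N} and~\ref{thm:A2D}, and uses Proposition~\ref{prop:l1lower} for the final sentence. Your extra checks are correct details that the paper leaves implicit: the strict positivity of $W_0$ and $S_0$, the misprinted ``Neumann'' in Theorem~\ref{thm:A2D}, and the identification $T=\Tmax$.
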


Last, for companion, we will show that  Assumption 1 cannot by itself provide any uniform positive lower bound for $\Tmax$ in terms of the displayed lower-growth data.

\begin{proposition}\label{prop:noLowerA1}
Fixing a domain, exponents, the coefficients in Assumption 1 and a positive constant initial datum for the Neumann problem, then there is no positive lower bound for $\Tmax$ that depends only on these data and is valid for every reaction pair satisfying Assumption 1.
\end{proposition}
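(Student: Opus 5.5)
The plan is to build an explicit one-parameter family of reactions that all satisfy Assumption~1 with the fixed data, and whose maximal existence times tend to zero. Fix the constant data $u_0\equiv\mu>0$, $v_0\equiv\nu>0$, with $\nu$ possibly zero but $\mu+\nu>0$. Fix the exponents $p_1,q_1,p_2,q_2$ and the lower coefficients $a_1^*,a_2^*,b_1^*,b_2^*$; without loss of generality we may take $a_i\equiv a_i^*$. For a parameter $K\ge 1$ define
\[
 F_K(x,t,u,v,V)=a_1^*v^{p_1}+b_1^*V^{p_2}+K(1+u+v)^{2},\qquad
 G_K(x,t,u,v,U)=a_2^*u^{q_1}+b_2^*U^{q_2}+K(1+u+v)^{2}.
\]
Here nonnegative arguments are understood, and the functions are extended evenly or by truncation to keep H1 in force. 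These reactions are smooth and nonnegative, so H1 and H2 hold. They are nondecreasing in $v$ and $V$ (respectively in $u$ and $U$), so H3 holds. The constant data are $C^{2+\vartheta}$ and satisfy the Neumann compatibility condition, so H4 holds. Finally \eqref{eq:A1F}--\eqref{eq:A1G} hold because the extra term is nonnegative. Thus every $(F_K,G_K)$ is admissible with the prescribed data.

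Because the data are constant and the reactions do not depend on $x$, the Neumann solution is spatially homogeneous. By the uniqueness in Proposition~\ref{prop:local}, it is $u(x,t)=\bar u(t)$, $v(x,t)=\bar v(t)$, where $(\bar u,\bar v)$ solves the ODE system
\[
 \bar u'=a_1^*\bar v^{p_1}+b_1^*(D\bar v)^{p_2}+K(1+\bar u+\bar v)^2,\quad
 \bar v'=a_2^*\bar u^{q_1}+b_2^*(D\bar u)^{q_2}+K(1+\bar u+\bar v)^2 .
\]
Set $\zeta=1+\bar u+\bar v$. Dropping the nonnegative terms gives $\zeta'\ge 2K\zeta^2$ with $\zeta(0)=1+\mu+\nu$. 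Comparison with the Riccati equation then yields
\[
 \Tmax(K)\le \frac{1}{2K(1+\mu+\nu)}\longrightarrow 0\quad (K\to\infty).
\]
Any positive lower bound depending only on the fixed data would have to lie below every $\Tmax(K)$, which is impossible.

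The only delicate step is confirming that the added term is compatible with H1--H3 while preserving the structure of Assumption~1. The quadratic term $K(1+u+v)^2$ is monotone in the required variables, locally Lipschitz, and nonnegative, so this poses no real obstacle.

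Alternatively, one could replace the added term by $K v^{p_1}$ and $K u^{q_1}$, in the spirit of scaling $a_i$ upward. I prefer the additive quadratic term because it forces blow-up even if one of $\mu,\nu$ vanishes.

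The reduction to the ODE relies only on the uniqueness in Proposition~\ref{prop:local}. The blow-up of the ODE solution is genuine blow-up of the PDE solution, since $\|u\|_\infty+\|v\|_\infty=\bar u+\bar v$.
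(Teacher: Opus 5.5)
Your proof is correct and follows essentially the paper's route: you perturb the reactions by $K$ times a nonnegative superlinear term that keeps H1–H3 and Assumption 1 intact, reduce to the spatially homogeneous ODE, and obtain a blow-up time of order $1/K$. The only differences are that the paper adds $Ku^r$ and $Kv^r$ rather than $K(1+u+v)^2$, and that you justify the ODE reduction through uniqueness explicitly, a step the paper leaves implicit.
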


{\bf Proof:} If we take $u_0=v_0=m>0$, fix $r>1$ and set
\begin{align*}
 F_K&=a_1^*v^{p_1}+b_1^*V^{p_2}+Ku^r,\\
 G_K&=a_2^*u^{q_1}+b_2^*U^{q_2}+Kv^r
\end{align*} 
for every $K>0$, then these reactions satisfy Assumption 1. And we have
\[
 u'(t)\ge Ku(t)^r,
\]
consequently,
\[
 T_{\max}^{(K)}\le\frac{m^{1-r}}{(r-1)K}\longrightarrow0
 \qquad(K\to\infty).
\]
Thus no positive lower bound can depend only on the Assumption  1 with lower-growth data and this fixed initial datum.

Similarly, in the case of Dirichlet boundary condition, adding $Ku^r$ and testing against $\varphi_1$, one can obtain $X'\ge-\lambda_1X+KX^r$, whose Osgood upper time tends to zero as $K\to\infty$ once $KX_0^{r-1}>\lambda_1$. \hfill $\Box$

Last, we summarize the main results in Table~\ref{tab:matrix}.

\begin{table}[htbp]
\centering
\small
\caption{Main lifespan conclusions.}\label{tab:matrix}
\begin{tabular}{>{\raggedright\arraybackslash}p{0.18\textwidth} >{\raggedright\arraybackslash}p{0.23\textwidth} >{\raggedright\arraybackslash}p{0.22\textwidth} >{\raggedright\arraybackslash}p{0.22\textwidth}}
\toprule
Regime & Finite-time conclusion & Upper bound & Lower bound \\
\midrule
Assumption 1 + Neumann & Every nonzero nonnegative datum & $\displaystyle\int_{W_0}^{\infty}h_{1,N}(s)^{-1}\dd s$ & No uniform positive lower bound from Assumption 1 alone \\
\addlinespace
Assumption 1 + Dirichlet & If $h_{1,D}(S_0)>\lambda_1S_0$ & $\displaystyle\int_{S_0}^{\infty}\frac{\dd s}{h_{1,D}(s)-\lambda_1s}$ & No uniform positive lower bound from Assumption 1 alone \\
\addlinespace
Assumption 2 + Neumann & Every nonzero nonnegative datum & $\displaystyle\int_{W_0}^{\infty}h_{2,N}(s)^{-1}\dd s$ & $\displaystyle\int_{Z_0}^{\infty}H_\infty(s)^{-1}\dd s$ \\
\addlinespace
Assumption 2 + Dirichlet & If $h_{2,D}(S_0)>\lambda_1S_0$ & $\displaystyle\int_{S_0}^{\infty}\frac{\dd s}{h_{2,D}(s)-\lambda_1s}$ & $\displaystyle\int_{Z_0}^{\infty}H_\infty(s)^{-1}\dd s$ \\
\bottomrule
\end{tabular}
\end{table}

\end{document}